\documentclass[11pt]{article}

\usepackage{amsfonts}
\usepackage{amssymb,amsmath,amsthm}
\usepackage{latexsym}
\usepackage{fullpage}
\usepackage{hyperref}
\usepackage{graphicx}
\usepackage{tkz-euclide,subfigure}
\usepackage{tikz}
\usepackage[utf8]{inputenc}
\usepackage{pgfplots}
\usetikzlibrary{shapes.misc}
\usepackage{ amssymb }

\newtheorem{theorem}{Theorem}[section]

\newtheorem{lemma}[theorem]{Lemma}

\theoremstyle{definition}
\newtheorem{remark}[theorem]{Remark}

\newcounter{tenumerate}

\def\P{\mathbb{P}}

\renewcommand{\epsilon}{\varepsilon}

\newcommand{\E}{{\mathbb E}}
\newcommand{\remove}[1]{}

\renewcommand{\leq}{\leqslant}
\renewcommand{\geq}{\geqslant}

\def\XXint#1#2#3{{\setbox0=\hbox{$#1{#2#3}{\int}$}
\vcenter{\hbox{$#2#3$}}\kern-.5\wd0}}

\title{Long range order for random field Ising and Potts models}
\author{Jian Ding\thanks{Partially supported by NSF grant DMS-1953848.}  \\ University of Pennsylvania \and Zijie Zhuang\footnotemark[1]  \\ University of Pennsylvania}

\begin{document}

\maketitle

\begin{abstract}
We present a new and simple proof for the classic results of Imbrie (1985) and Bricmont--Kupiainen (1988) that for the random field Ising model in dimension three and above there is long range order at low temperatures with presence of weak disorder. With the same method, we obtain a couple of new results: (1) we prove that long range order exists for the random field Potts model at low temperatures with presence of weak disorder in dimension three and above; (2) we obtain a lower bound on the correlation length for the random field Ising model at low temperatures in dimension two (which matches the upper bound in Ding--Wirth (2020)).
Our proof is based on an extension of the Peierls argument with inputs from Chalker (1983), Fisher--Fr\"ohlich--Spencer (1984), Ding--Wirth (2020) and Talagrand's majorizing measure theory (1980s) (and in particular, our proof does not involve the renormalization group theory).
\end{abstract}

\section{Introduction}

For $d\geq 2$, we consider the $d$-dimensional lattice $\mathbb Z^d$ where two vertices $u$ and $v$ are adjacent (and we write $u\sim v$) if their $\ell_1$-distance is 1.
For $N\geq 1$, let $\Lambda_N=[-N,N]^d \cap \mathbb Z^d$ be the box of side length $2N$ centered at the origin $o$. For $v\in \mathbb Z^d$, let $h_v$ be i.i.d.\ Gaussian random variables with mean 0 and variance 1 (we denote by $\mathbb P$ and $\mathbb E$ the measure and expectation with respect to $\{h_v\}$, respectively). For $\epsilon \geq 0$, the random field Ising model (RFIM) Hamiltonian $H^{\pm, \Lambda_N, \epsilon h}$ with the plus (respectively minus) boundary condition and external field $\{ \epsilon h_v :v \in \mathbb{Z}^d \}$ is defined to be
\begin{equation}
\label{def_h}
H^{\pm, \Lambda_N, \epsilon h} (\sigma)= - \Big(\sum_{u \sim v, u,v \in \Lambda_N} \sigma_u \sigma_v \pm \sum_{u \sim v, u \in \Lambda_N, v\in \Lambda_N^c} \sigma_u + \sum_{u \in \Lambda_N} \epsilon h_u \sigma_u\big),
\end{equation}
where $\sigma \in \{-1, 1\}^{\Lambda_N}$.
For $T \geq 0$, we define $\mu^{\pm}_{T, \Lambda_N, \epsilon h} $ to be the Gibbs measure on $\{-1,1\}^{\Lambda_N}$ at temperature $T$ by
\begin{equation}
\label{def_mu}
\mu^{\pm}_{T, \Lambda_N, \epsilon h} (\sigma)= \frac{1}{\mathcal{Z}^\pm_{T, \Lambda_N} (\epsilon h)} e^{-\frac{1}{T} H^{\pm, \Lambda_N, \epsilon h} (\sigma)},
\end{equation}
where (again) $\sigma\in \{-1, 1\}^{\Lambda_N}$ and $\mathcal{Z}^\pm_{T, \Lambda_N}( \epsilon h)$ is the partition function given by
\begin{equation}
\label{def_z}
\mathcal{Z}^\pm_{T, \Lambda_N}( \epsilon h) = \sum_{\sigma \in \{-1,1\}^{\Lambda_N}} e^{-\frac{1}{T} H^{\pm, \Lambda_N, \epsilon h} (\sigma)}.
\end{equation}
(Note that $\mu^\pm_{T, \Lambda_N, \epsilon h}$ and $\mathcal Z^\pm_{T, \Lambda_N} (\epsilon h)$ are random variables depending on $\{h_v\}$.)
We say long range order exists for RFIM if for a typical instance of the disorder, the boundary influence $m_{T,\Lambda_N, \epsilon h}$ stays above a positive constant as $N\to \infty$, where the boundary influence is defined as
$$
m_{T,\Lambda_N, \epsilon h} = \mu^{+}_{T, \Lambda_N, \epsilon h}(\sigma_o = 1) - \mu^-_{T, \Lambda_N, \epsilon h}(\sigma_o = 1)\,.
$$
The question on long range order is relatively easy when the disorder is strong, i.e., when $\epsilon$ is large. In this case, it was shown in \cite{Ber85, FI84, CJN18} (see also \cite[Appendix A]{AP19}) that for any dimension, the boundary influence decays exponentially in $N$ (so in particular no long range order). The question becomes substantially more challenging when the disorder is weak, i.e., when $\epsilon$ is small. In this case, it was predicted in \cite{IM75} that long range order exists at low temperatures for $d\geq 3$ but not for $d=2$. There has been controversy over this prediction for quite some time, and it was finally proved to be correct by \cite{Imbrie85, BK88} for $d=3$ and by \cite{AW90} for $d=2$. In recent works  \cite{Chatterjee18, AP19, DX21, AHP20} quantitative bounds on the decay rate in dimension two were obtained, and in particular, exponential decay was finally established in \cite{DX21, AHP20}. In addition, the authors of \cite{DW20} studied (a notion of) the correlation length, defined as
\begin{equation}
\label{def_psi}
\psi(T, \mathsf m, \epsilon) =\min \{N:  \mathbb E(m_{T,\Lambda_N, \epsilon h} ) \leq \mathsf m\}, \mbox{ for }\mathsf m\in (0, 1).
\end{equation}
It was shown in \cite{DW20} that for any fixed $\mathsf m\in (0, 1)$, the correlation length $\psi(T, \mathsf m, \epsilon)$ scales as $e^{\epsilon^{-4/3 + o(1)}}$ as $\epsilon \to 0$ for $T = 0$ (and the upper bound holds for all $T \geq 0$).  Finally, it is worth mentioning that in \cite{DSS21} a general inequality for the Ising model was derived which in particular implies exponential decay for RFIM as long as $T$ exceeds the critical temperature (here the critical temperature is with respect to the Ising model with no external field).

The method in the classic papers \cite{Imbrie85, BK88} for proving lower bounds on the boundary influence is based on a sophisticated scheme of renormalization group theory, and as a result it seems very difficult (if possible at all) to extend to e.g.\ the random field Potts model. All the recent works focused on proving upper bounds on the boundary influence except that a lower bound was derived in \cite{DW20} for $T = 0$. In the setting of \cite{DW20} for $T=0$ the RFIM measure is supported on either the all-plus or the all-minus configuration depending on the boundary condition, and thus the lower bound follows from an upper bound on the supremum of a Gaussian process (i.e., the greedy lattice animal process normalized by its boundary size as in \cite{DW20}). It is expected that the lower bound in \cite{DW20} holds also at low temperatures, but in attempts to extending from $T = 0$ to $T>0$ difficulties arise from hierarchical structure of sign clusters. Despite the fact that the renormalization group theoretic approach in \cite{Imbrie85, BK88} was designed exactly to tackle this challenge, it seems difficult to adapt their method from dimension three to dimension two.

In this paper, we present a simple proof for the following result of \cite{Imbrie85} ($T=0$) and of \cite{BK88} ($T>0$) on the existence of long range order.
\begin{theorem}\cite{Imbrie85, BK88}
\label{RFIM3}
For $d \geq 3$, there exists a constant $c>0$ such that for all $0\leq T, \epsilon \leq c $ and all $N\geq 1$ with $\mathbb P$-probability at least $1 - e^{-c/T} - e^{-c/\epsilon^2}$ we have
\begin{equation*}
\mu_{T, \Lambda_N, \epsilon h}^+(\sigma_o=1) \geq 1- e^{-c/T} - e^{-c/\epsilon^2}.
\end{equation*}
\end{theorem}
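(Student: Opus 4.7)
My plan is to run a Peierls-type argument adapted to the random field. Under the $+$ boundary condition, if $\sigma_o = -1$ then some contour $\gamma$ of the $(+,-)$-interface encloses the origin; write $|\gamma|$ for its surface area and $A_\gamma \subset \mathbb{Z}^d$ for the finite region it bounds. A block-flip argument (exchanging configurations on the two sides of $\gamma$) produces a Peierls-type estimate of the schematic form
\begin{equation*}
\mu^{+}_{T,\Lambda_N,\epsilon h}(\sigma_o = -1) \leq \sum_{\gamma \ni o} \exp\!\Big(-\tfrac{2}{T}|\gamma| + \tfrac{2\epsilon}{T} |X_\gamma|\Big), \qquad X_\gamma := \sum_{u \in A_\gamma} h_u.
\end{equation*}
So it suffices to establish, on an event of $\mathbb{P}$-probability at least $1 - e^{-c/\epsilon^2}$, the uniform bound $|X_\gamma|/|\gamma| \leq c'/\epsilon$ over all contours $\gamma$ surrounding $o$; on this event the sum is at most $\sum_{\gamma \ni o} e^{-c|\gamma|/T} \leq e^{-c/T}$ by the standard Peierls count, which gives the theorem.

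\textbf{Main obstacle.} A direct union bound is too weak: at scale $|\gamma| = L$ there are $e^{O(L)}$ contours surrounding $o$, whereas $X_\gamma$ is centered Gaussian with variance $|A_\gamma| \leq CL^{d/(d-1)}$ (discrete isoperimetric inequality), so its Gaussian tail at level $L/\epsilon$ is only $\exp(-cL^{(d-2)/(d-1)}/\epsilon^2)$. For $d=3$ this is $\exp(-c\sqrt{L}/\epsilon^2)$, which does not beat the combinatorial $e^{cL}$ in the intermediate range $L \gg \epsilon^{-O(1)}$. The resolution, in the spirit of Chalker (1983), Fisher-Fr\"ohlich-Spencer (1984), and Ding-Wirth (2020), is to exploit the substantial correlation in $\{X_\gamma\}$ between geometrically close contours. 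The natural Gaussian metric is $d(\gamma_1, \gamma_2) = |A_{\gamma_1} \triangle A_{\gamma_2}|^{1/2}$, which is small whenever $\gamma_1$ and $\gamma_2$ agree on a coarse scale. Talagrand's majorizing measure theorem (equivalently, generic chaining) is then the natural device to convert a multi-scale hierarchical cover of contour space into sharp control on the Gaussian supremum.

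\textbf{Execution.} I would group contours by dyadic scale $|\gamma| \in [2^k, 2^{k+1})$ and run generic chaining on each. For block side $b$, approximate a contour $\gamma$ by the union of $b$-blocks that it touches; a Chalker-type combinatorial estimate bounds the number of such coarse-grained contours by $\exp(C|\gamma|/b^{d-1})$, while the Gaussian distance between $\gamma$ and its coarse approximation is at most $(|\gamma|\cdot b)^{1/2}$, because the symmetric difference of their interiors lies in a boundary layer of thickness $\lesssim b$. Feeding these counts and distances into the $\gamma_2$-functional gives $\mathbb{E}[\sup_{|\gamma|\asymp 2^k} X_\gamma] \leq C\cdot 2^k$, and Borell-TIS concentration (with uniform variance bound $\sup_\gamma \mathrm{Var}(X_\gamma) \leq C\, 2^{kd/(d-1)}$) upgrades this to
\begin{equation*}
\mathbb{P}\!\Big(\sup_{|\gamma|\in[2^k,2^{k+1}),\,\gamma \ni o} \frac{|X_\gamma|}{|\gamma|} > \delta\Big) \leq \exp\!\big(-c\delta^2 \cdot 2^{k(d-2)/(d-1)}\big), \qquad \delta \geq 2C.
\end{equation*}
Taking $\delta = c'/\epsilon$ and summing over $k \geq 0$ gives total exceptional probability at most $e^{-c/\epsilon^2}$, using $d \geq 3$ so that the exponent $(d-2)/(d-1) > 0$ is strictly positive.

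\textbf{Completion and anticipated difficulty.} On the good event, substituting $|X_\gamma| \leq (c'/\epsilon)|\gamma|$ into the Peierls bound gives $\mu^+(\sigma_o = -1) \leq \sum_{\gamma \ni o} e^{-(2-2c')|\gamma|/T} \leq e^{-c/T}$, which combined with the Gaussian tail yields the stated probability bound. I expect the core difficulty to lie in the chaining step: one must (i) formalize the coarse-graining so that coarse contours admit a genuine Chalker-type count at every block scale simultaneously, and (ii) verify that the Gaussian metric between a contour and its coarse approximation decays at the rate claimed above, so that the $\gamma_2$-functional converges to the desired $O(|\gamma|)$ rather than the uninformative $O(|\gamma|^{d/(2(d-1))})$ produced by a naive Dudley bound applied to the full contour set. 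Once these geometric and combinatorial inputs are in place, Borell-TIS and the Peierls bookkeeping are essentially routine.
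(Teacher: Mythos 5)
The second half of your outline --- controlling $\sup_{\gamma}|X_\gamma|/|\gamma|$ over simply connected sets at each dyadic scale by a Chalker/Fisher--Fr\"ohlich--Spencer coarse graining plus Borell--TIS, yielding failure probability $e^{-c/\epsilon^2}$ for $d\ge 3$ --- is essentially what the paper uses (it invokes the multiscale argument of \cite{FFS84} for exactly this). The genuine gap is in your first, ``schematic'' step, and it is precisely the step where the paper's contribution lies. If $A_\gamma$ is the region enclosed by the outermost contour of the minus cluster of $o$ (a simply connected hull, which is what makes the entropy count $e^{O(|\gamma|)}$ valid), then the spin flip $\sigma\mapsto\sigma^{A_\gamma}$ changes the field energy by $2\epsilon\sum_{u\in A_\gamma}h_u\sigma_u$, \emph{not} by $2\epsilon X_\gamma$ with $X_\gamma=\sum_{u\in A_\gamma}h_u$: the hull generally contains plus ``islands'', so the relevant sum is weighted by the unknown configuration inside $A_\gamma$, and the uniform bound over configurations is $2\epsilon\sum_{u\in A_\gamma}|h_u|\asymp\epsilon|A_\gamma|$, which is far too large since $|A_\gamma|$ can be of order $|\gamma|^{d/(d-1)}\gg|\gamma|$. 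If instead you flip only the minus cluster itself, so that the field change really is $\pm 2\epsilon X_C$, you lose the combinatorics: connected (not simply connected) sets containing $o$ with $|\partial C|=n$ number $e^{\Theta(n\log n)}$ (one can place order $n$ isolated holes anywhere in a region of volume $n^{d/(d-1)}$), which defeats $e^{-2n/T}$ at large scales for any fixed $T>0$. So your displayed Peierls inequality does not follow from the flip you describe; this is exactly the configuration-dependence problem that the paper identifies as the obstruction in \cite{FFS84}.

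The paper's resolution is to flip the external field together with the spins, i.e.\ to work with the map $(h,\sigma)\mapsto(h^{\mathcal A_\sigma},\sigma^{\mathcal A_\sigma})$ on the joint space $\mathbb Q^+$. The field--spin interaction is then exactly preserved, the edge interaction still gains $e^{-2|\partial A|/T}$, and the price is the partition function ratio $\mathcal Z^+(h^{A})/\mathcal Z^+(h)=e^{\Delta_A(h)/T}$ as in \eqref{key}, where $\Delta_A(h)$ is the free energy difference. The key technical input (Lemma~\ref{lem-upper-bound-Delta}) is that $\Delta_A(h)$ is a $2\epsilon$-Lipschitz function of $\{h_v:v\in A\}$ with conditional mean zero, hence by Gaussian concentration sub-Gaussian with variance proxy of order $\epsilon^2|A|$, and likewise its increments are controlled by $\epsilon^2|A\oplus A'|$. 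This makes $\Delta_A$ a configuration-independent surrogate for $2\epsilon X_A$ to which your coarse-graining step applies verbatim over simply connected sets. Replacing your schematic bound by this joint flip and the free-energy tail estimate, the remainder of your plan goes through and coincides with the paper's proof.
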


Our method is robust and can be applied to deduce a lower bound on the correlation length in dimension two for small $T>0$.
\begin{theorem}
\label{RFIM2}
For $d=2$ and every $\mathsf m \in (0,1)$, there exists a constant $c  =c(\mathsf m)>0$ such that for $0\leq T, \epsilon \leq c$
$$
 \psi(T, \mathsf m, \epsilon) \geq e^{\frac{c\epsilon^{-4/3}}{ \log(1/\epsilon)}}.
$$
\end{theorem}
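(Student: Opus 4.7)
My strategy is to re-run the Peierls-with-Talagrand scheme developed for the proof of Theorem~\ref{RFIM3} and to track carefully how far it reaches in dimension two.

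Under the plus boundary condition, the event $\{\sigma_o=-1\}$ forces the existence of a Peierls contour $\gamma$ separating $o$ from $\partial\Lambda_N$, and the standard flip-the-interior comparison yields
$$\mu^+_{T,\Lambda_N,\eps h}(\sigma_o=-1)\;\leq\;\sum_{\gamma\ni o}\exp\!\left(-\tfrac{2}{T}|\gamma|+\tfrac{2\eps}{T}X_{A(\gamma)}\right),\qquad X_A=\sum_{v\in A}h_v,$$
where $A(\gamma)\ni o$ is the finite component of $\mathbb Z^2\setminus\gamma$ containing the origin. It therefore suffices to construct an event of $\mathbb P$-probability bounded below by a function of $\mathsf m$ on which this Peierls sum is smaller than a small constant, uniformly in $N\leq \exp(c\eps^{-4/3}/\log(1/\eps))$.

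The heart of the argument is the control of the Gaussian supremum $\sup_\gamma X_{A(\gamma)}$. Following Chalker (1983), Fisher--Fr\"ohlich--Spencer (1984), and Ding--Wirth (2020), I introduce a hierarchical coarse-graining of contours on geometric scales $L_k=2^k$: at scale $L_k$ a contour $\gamma$ is represented by the family of $L_k\times L_k$ blocks it visits, and $X_A$ is approximated by the corresponding sum of block averages. Since the number of coarse animals through $o$ with coarse perimeter $n$ is at most $e^{Cn}$, Talagrand's majorizing measure theorem---applied in the intrinsic metric $d(A,A')=\sqrt{|A\triangle A'|}$ on the Gaussian process $A\mapsto X_A$---produces, on an event of $\mathbb P$-probability $\geq 1-e^{-c\eps^{-2}}$, a uniform bound of the rough form $\sup_{|\gamma|\leq \ell}|X_{A(\gamma)}|\leq C\,\ell\,\sqrt{\log(1/\eps)}$. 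This improves on the $\ell^{3/2}$ one would get from a direct Gaussian max over exponentially many animals, and is what allows the Peierls series to be pushed past the Imry--Ma scale. Plugged back into the Peierls bound, each contour of length $\ell$ contributes at most $\exp\!\bigl(-\ell(2-C\eps\sqrt{\log(1/\eps)})/T\bigr)$, so summability against the $e^{O(\ell)}$ contour count persists as long as $\eps\sqrt{\log(1/\eps)}<c$. The optimal coarse-graining scale $L\sim\eps^{-2/3}$ is the one at which the chaining entropy and Talagrand's metric entropy balance, and it yields exactly the announced scaling $\eps^{-4/3}/\log(1/\eps)$, matching the upper bound of Ding--Wirth (2020).

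\textbf{Main obstacle.} The crux, absent in $d\geq 3$, is that the Peierls expansion in $d=2$ is only marginally convergent: energy cost $|\gamma|/T$ and disorder gain $\eps X_A/T$ are of the same order up to polylogarithms, so every constant matters in the multi-scale bookkeeping. In particular the Talagrand theorem must be applied at the precise intermediate scale $L\sim \eps^{-2/3}$, and the chaining must be truncated carefully to produce the $\sqrt{\log(1/\eps)}$ loss rather than a full $\sqrt{\log N}$; the $\log(1/\eps)$ factor in the statement of Theorem~\ref{RFIM2} reflects exactly this truncation. A secondary difficulty, not present in the $T=0$ analysis of Ding--Wirth, is the handling of thermal droplets at $T>0$: these must be absorbed into the coarse-grained contour description without disturbing the scale balance, and this is where the contour-weight estimates developed for the proof of Theorem~\ref{RFIM3} come back into play.
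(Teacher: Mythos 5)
There are two genuine gaps in your proposal, and together they miss the central mechanism of the paper's proof.

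First, the Peierls inequality you start from,
\[
\mu^+_{T,\Lambda_N,\eps h}(\sigma_o=-1)\;\le\;\sum_{\gamma\ni o}\exp\Big(-\tfrac2T|\gamma|+\tfrac{2\eps}{T}X_{A(\gamma)}\Big),
\]
is not valid at $T>0$. If you flip only the spins in $A(\gamma)$, the change in the field term of the Hamiltonian is $2\eps\sum_{v\in A(\gamma)}h_v\sigma_v$, which depends on the configuration inside $A(\gamma)$ (the spins there are not all $-1$), not on $X_{A(\gamma)}=\sum_{v\in A(\gamma)}h_v$. This is exactly the obstruction the paper identifies to extending the $T=0$ argument of \cite{DW20} to positive temperature. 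The paper's resolution is to flip the field together with the spins, $(h,\sigma)\mapsto(h^{\mathcal A_\sigma},\sigma^{\mathcal A_\sigma})$; then the field term cancels exactly and the quantity to control becomes the free energy difference $\Delta_A(h)$ of \eqref{eq-def-Delta} (see \eqref{key}), a non-Gaussian functional of $h$, not the raw field sum $X_A$. Your proposal only ever controls the Gaussian process $X_A$, and the closing remark about ``thermal droplets being absorbed into the coarse-grained contour description'' names the problem without supplying a mechanism.

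Second, your claimed bound $\sup_{|\gamma|\le\ell}|X_{A(\gamma)}|\le C\,\ell\,\sqrt{\log(1/\eps)}$, with the resulting convergence condition $\eps\sqrt{\log(1/\eps)}<c$, is independent of $N$; if it were true, the Peierls sum would converge for all $N$ and you would have proved long range order in $d=2$, contradicting \cite{AW90}. The correct two-dimensional estimate, which the paper imports rather than reproves, is $\E\sup_{A}H_A/|\partial A|\le C(\log N\log\log N)^{3/4}$ from \cite[Theorem 1.4]{DW20}; the exponent $4/3$ arises from solving $\eps(\log N)^{3/4}\lesssim 1$, i.e., $\log N\lesssim\eps^{-4/3}$, and the $N$-dependence must enter the bookkeeping. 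What the paper adds on top of \cite{DW20} is precisely the comparison step you omit: $\Delta_A(h)$ has sub-Gaussian increments in the metric $\sqrt{|A\oplus A'|}$ (Lemma~\ref{lem-upper-bound-Delta}), so Talagrand's majorizing measure theorem in its comparison form (Lemma~\ref{lem-Talagrand}) gives $\E\sup_{A\in\mathcal B_k}\Delta_A(h)\le C\,\E\sup_{A\in\mathcal B_k}2\eps H_A$ on each dyadic boundary-size class $\mathcal B_k=\{A:|\partial A|\in[2^k,2^{k+1}]\}$, after which the Ding--Wirth bound, Gaussian concentration of the Lipschitz functional $\sup_{A\in\mathcal B_k}\Delta_A(h)$, and a union over $k$ yield the event $\mathcal E$ on which the Peierls argument of Theorem~\ref{RFIM3} runs verbatim. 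This transfer from the free-energy process to the Gaussian greedy-lattice-animal process is the key idea of the proof of Theorem~\ref{RFIM2}, and it is absent from your proposal.
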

Note that the lower bound in Theorem~\ref{RFIM2} matches the upper bound of $e^{ O(\epsilon^{-4/3})}$ as in \cite{DW20} in terms of the $4/3$-exponent.

Finally, we consider the random field Potts model (RFPM). For $\mathsf q\geq 3$, let $h_{\mathsf k, v}$ for $\mathsf k \in \{\mathsf 1, \ldots, \mathsf q\}$ and $v\in \mathbb Z^d$ be i.i.d.\ Gaussian variables with mean 0 and variance 1. (Here we used mathsf font $\mathsf 1, \ldots, \mathsf q$ to emphasize that they denote states of spins in the Potts model, but they are also numbers in nature and thus we have e.g. $\mathsf 1 + \mathsf 3 = \mathsf 4$.) The RFPM Hamiltonian with boundary condition $\mathsf k$ and external field $\epsilon h$ is defined by
\begin{equation}
\label{def_h_potts}
H^{\mathsf k, \Lambda_N, \epsilon h}(\sigma) = - \Big(\sum_{u \sim v, u,v \in \Lambda_N} \mathbf{1}_{\sigma_u=\sigma_v}  + \sum_{u \sim v, u \in \Lambda_N, v \in \Lambda_N^c} \mathbf{1}_{\sigma_u= \mathsf k}  + \sum_{u \in \Lambda_N} \epsilon h_{\sigma_u, u}\Big)\,,
\end{equation}
where $\sigma\in \{\mathsf 1, \ldots, \mathsf q\}^{\Lambda_N}$. Then, similarly, we can define
\begin{equation}
\label{def_mu_potts}
\mu^{\mathsf k}_{T, \Lambda_N, \epsilon h} (\sigma)= \frac{1}{\mathcal{Z}^{\mathsf k}_{T, \Lambda_N}(\epsilon h)} e^{- \frac{1}{T} H^{\mathsf k, \Lambda_N, \epsilon h} (\sigma)},
\end{equation}
where $\mathcal{Z}^{\mathsf k}_ {T, \Lambda_N}(\epsilon h)$ is the partition function given by
\begin{equation}
\label{def_z_potts}
\mathcal{Z}^{\mathsf k}_{T, \Lambda_N} (\epsilon h)=\sum_{\sigma \in \{\mathsf 1,\ldots, \mathsf q\}^{\Lambda_N}} e^{-\frac{1}{T} H^{\mathsf k, \Lambda_N, \epsilon h} (\sigma)}\,.
\end{equation}
It was shown in \cite{AW90} that long range order does not exist for RFPM for $d=2$ and some quantitative bound on the decay rate was obtained in \cite{DHP21}.
We prove that for $d\geq 3$, long range order exists for RFPM at low temperatures with weak disorder.
\begin{theorem}
\label{RFPM3}
For $d, \mathsf q \geq 3$, there exists a constant $c>0$ such that for all $0\leq T, \epsilon \leq c$ and for all $N\geq 1$ with $\mathbb P$-probability at least $1 - e^{-c/T} - e^{-c/\epsilon^2}$
$$
\mu^{\mathsf 1}_{T, \Lambda_N, \epsilon h}(\sigma_o= \mathsf 1) \geq 1-  e^{-c/T} -  e^{-c/ \epsilon^2}.
$$
\end{theorem}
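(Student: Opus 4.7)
The plan is to mimic the Peierls-type argument used in the proof of Theorem~\ref{RFIM3}, replacing Ising contours by their Potts analogues. Given $\sigma$ with $\mathsf 1$-boundary condition, on the event $\sigma_o \neq \mathsf 1$ the origin lies in a maximal connected cluster of $\{v : \sigma_v \neq \mathsf 1\}$; let $\Gamma$ denote its outer edge-boundary and $V(\Gamma)$ its interior. I would enumerate over pairs $(\Gamma, \mathsf k)$ with $\mathsf k \in \{\mathsf 2, \ldots, \mathsf q\}$ identified as the majority state inside $V(\Gamma)$, compare each $\sigma$ carrying this contour to the configuration $T_{\Gamma, \mathsf k}\sigma$ obtained by resetting $\sigma_v = \mathsf 1$ throughout $V(\Gamma)$, and show that the $\mathbb P$-probability that some such bad pair has Gibbs weight comparable to that of $T_{\Gamma,\mathsf k}\sigma$ decays exponentially in $|\Gamma|$.

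The deterministic and probabilistic inputs then parallel those behind Theorem~\ref{RFIM3}. Passing from $\sigma$ to $T_{\Gamma, \mathsf k}\sigma$ turns $|\Gamma|$ disagreeing boundary edges into agreeing ones, a deterministic gain of $|\Gamma|/T$ in log-weight; the external-field change is $\epsilon \sum_{v \in V(\Gamma)} (h_{\mathsf k, v} - h_{\mathsf 1, v})$, a centered Gaussian with variance $2\epsilon^2 |V(\Gamma)|$. Using the $d\geq 3$ isoperimetric bound $|V(\Gamma)| \leq C_d |\Gamma|^{d/(d-1)}$, its standard deviation is of order $|\Gamma|^{d/(2(d-1))} \ll |\Gamma|$, so for each individual contour the field gain is dwarfed by the surface cost. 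Upgrading this to a uniform bound over all contours surrounding $o$ uses the same chaining and majorizing-measure estimates (inputs of Chalker, Fisher--Fr\"ohlich--Spencer, and Ding--Wirth) as in the Ising argument. Enumerating the state label $\mathsf k$ contributes an extra factor of $(\mathsf q -1)$ per contour, easily absorbed into the $e^{-c|\Gamma|/T}$ decay since $\mathsf q$ is fixed.

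The one genuinely Potts-specific obstacle is the fate of interactions inside $V(\Gamma)$. In the Ising setting, $\sigma \mapsto -\sigma$ on $V(\Gamma)$ is an isometry of the interior interactions, so interior energy is preserved for free; in Potts the map $T_{\Gamma, \mathsf k}$ trivially preserves interior interaction energy only when $V(\Gamma)$ is already monochromatic in state $\mathsf k$. I expect the cleanest way around this is a two-step procedure: first restrict the Gibbs-weight comparison to configurations whose interior is $(1-\delta)$-aligned with $\mathsf k$ (overwhelmingly typical at low $T$ conditional on the contour), and then recursively apply the same Peierls estimate to any secondary sub-contours carved out by minority states inside $V(\Gamma)$. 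Once this combinatorial bookkeeping is set up, the proof of Theorem~\ref{RFPM3} reduces, with only a constant-order overhead depending on $\mathsf q$, to the template already developed for Theorem~\ref{RFIM3}.
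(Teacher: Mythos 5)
Your proposal correctly identifies the single Potts-specific obstacle --- that resetting the interior of the contour to state $\mathsf 1$ does not preserve the interior interaction energy unless the interior is monochromatic --- but the workaround you sketch (restricting to $(1-\delta)$-aligned interiors and recursing on secondary sub-contours) is exactly the hierarchical-clusters difficulty that the paper is designed to avoid, and as stated it is a genuine gap rather than a reduction to the Ising template. The paper's resolution is different and much cleaner: instead of resetting spins to $\mathsf 1$, it applies a cyclic rotation $\theta^{\mathsf j}$ of the state space to every spin in $\mathcal A_\sigma$. Since $\theta^{\mathsf j}$ is a bijection on $\{\mathsf 1,\dots,\mathsf q\}$, every interior indicator $\mathbf 1_{\sigma_u=\sigma_v}$ is preserved exactly, with no alignment hypothesis and no recursion. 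The price is that a single rotation need not turn all boundary edges into agreements; this is handled by using the whole family $\mathsf j=\mathsf 1,\dots,\mathsf q-\mathsf 1$ and the identity $\sum_{\mathsf j}\sum_{(u,v)\in\partial\mathcal A_\sigma}\mathbf 1_{\sigma^{\mathcal A_\sigma,\mathsf j}_u=\sigma_v}=|\partial\mathcal A_\sigma|$ (each disagreeing boundary edge is satisfied by exactly one rotation), which yields the one-to-$(\mathsf q-\mathsf 1)$ comparison \eqref{potts_key} with gain $e^{-|\partial\mathcal A_\sigma|/((\mathsf q-\mathsf 1)T)}$.

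A second, related gap: you compare quenched Gibbs weights of $\sigma$ and $T_{\Gamma,\mathsf k}\sigma$ directly, so the field-energy change is $\epsilon\sum_{v\in V(\Gamma)}(h_{\sigma_v,v}-h_{\mathsf 1,v})$, which depends on the entire interior configuration and not just on $(\Gamma,\mathsf k)$; your formula $\epsilon\sum_v(h_{\mathsf k,v}-h_{\mathsf 1,v})$ only holds in the monochromatic case, and a uniform bound over all interiors compatible with a given contour is too lossy (this is precisely the failure mode of the naive quenched Peierls argument discussed in the paper's overview). The paper sidesteps this by working on the joint space of $(h,\sigma)$ and rotating the field labels together with the spins, $h\mapsto h^{\mathcal A_\sigma,\mathsf j}$, so the field-interaction term of the Hamiltonian is unchanged and the only stochastic quantity to control is the free-energy difference $\Delta_{A,\mathsf j}(h)=\mathcal F^{\mathsf 1}(h)-\mathcal F^{\mathsf 1}(h^{A,\mathsf j})$, which is $2\epsilon$-Lipschitz in each field coordinate and can be bounded uniformly over $A\in\mathfrak A$ by the Fisher--Fr\"ohlich--Spencer coarse-graining (Lemmas~\ref{lem-upper-bound-Delta-RFPM} and \ref{lem-Delta-bound-RFPM}). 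Your chaining/isoperimetry heuristics for $d\geq 3$ are in the right spirit for that last step, but without the rotation of both spins and field the quantity you would need to chain over is not the one these estimates control.
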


\begin{remark}
In our presentation, we have assumed that the distribution for the disorder is Gaussian for simplicity.
We expect that our proof extends to general distribution which is symmetric around 0 and has sub-Gaussian tail, but more care is required in order to replace the application of the Gaussian concentration inequality.
\end{remark}

\section{Overview of the proof}

Our proof is based on an extension of the Peierls argument \cite{Peierls1936}. Recalling the Peierls argument, one constructs a mapping which flips every spin that is in or enclosed by the sign component (a sign component is a connected component where all spins have the same sign) at the origin (so in particular one flips a simply connected component). Then one can upper-bound the probability for the origin to disagree with the boundary condition by analyzing the probability change (of the two configurations before and after the flipping) and the multiplicity of this mapping. When applying the Peierls argument to the RFIM in a straightforward manner with external field quenched, we run into the difficulty that the probability change depends on the external field. What is worse, such probability change also depends on the configuration we start with and as a result a uniform bound is too loose in most cases. Naturally, one may wish to extend the Peierls argument. Along this line, a very insightful attempt appeared in \cite{FFS84} although it seems difficult to verify the assumed hypothesis in \cite{FFS84}.
Our modification is to apply the Peierls argument to the joint space of the external field and the spin configurations. More precisely, when we flip a spin we simultaneously flip the sign of the external field thereon. This way, there is no change in Hamiltonian resulted from the interaction with the external field. However, there is a change in the partition function (since the external field is changed) and a key input is to upper-bound the change of partition function resulted from such flipping by the boundary size of the flipped simply connected component.

In order to upper-bound the change in the partition function, we consider the free energy for RFIM and RFPM respectively defined as:
\begin{equation}\label{eq-def-free-energy}
\mathcal{F}^\pm_{T, \Lambda_N} (\epsilon h) = - T \log \mathcal{Z}^\pm_{T, \Lambda_N} (\epsilon h) \mbox{ and } \mathcal{F}^{\mathsf k}_{T, \Lambda_N}( \epsilon h) = -T \log \mathcal{Z}^{\mathsf k}_{T, \Lambda_N}(\epsilon h) \mbox{ for } \mathsf k = \mathsf 1, \ldots, \mathsf q\,.
\end{equation}
The bound on the partition function is then reduced to a bound on the free energy. By Talagrand's majorizing measure theory \cite{Talagrand87, Talagrand96, Talagrand05} (improving previous works in \cite{Dudley67, Fernique71}), we know that the supremum of a mean-zero Gaussian process is up to a constant factor within the so-called $\gamma_2$-functional and in addition the supremum of a general mean-zero random process is upper-bounded by the $\gamma_2$-functional as long as the two-point fluctuation has a sub-Gaussian tail. Altogether, we get the following lemma, which is a consequence of \cite[Theorems 2.5, 5.1]{Talagrand96} and will be particularly important in our proof of Theorem~\ref{RFIM2}.
\begin{lemma}\label{lem-Talagrand}
Suppose that $\{X_t: t\in \Omega\}$ is a mean-zero Gaussian process and suppose that $\{Y_t: t\in \Omega\}$ is a mean-zero random process such that
$$\E(X_s - X_t)^2 = (d(s, t))^2 \mbox{ and } \mathbb P(|Y_s - Y_t| \geq \lambda) \leq 2 e^{-\frac{\lambda^2}{2 (d(s, t))^2}}$$
for all $s, t\in \Omega$ and for all $\lambda \geq 0$. Then there exists a universal constant $C>0$ such that $\E \sup_{t\in \Omega} Y_t \leq C \E \sup_{t\in \Omega} X_t$.
\end{lemma}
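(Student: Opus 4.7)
The plan is to sandwich both $\E\sup_{t\in\Omega} X_t$ and $\E\sup_{t\in\Omega} Y_t$ between universal constant multiples of a single geometric quantity: Talagrand's $\gamma_2$-functional of the metric space $(\Omega,d)$, defined by
$$\gamma_2(\Omega,d) = \inf_{(\mathcal A_n)} \sup_{t\in\Omega} \sum_{n\geq 0} 2^{n/2}\, \diam(A_n(t)),$$
where the infimum is taken over admissible sequences of partitions of $\Omega$ with $|\mathcal A_0|=1$, $|\mathcal A_n|\leq 2^{2^n}$ for $n\geq 1$, and $A_n(t)$ denotes the block of $\mathcal A_n$ containing $t$. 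The lemma would then follow from the chain of inequalities
$$\E\sup_{t\in\Omega} Y_t \;\leq\; C_1\, \gamma_2(\Omega,d) \;\leq\; C_2\, \E\sup_{t\in\Omega} X_t.$$

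For the right inequality I would invoke Talagrand's majorizing measure theorem (Theorem~2.5 in~\cite{Talagrand96}): for a centered Gaussian process whose natural $L^2$-metric coincides with $d$, one has $\E\sup_t X_t \geq c\, \gamma_2(\Omega,d)$ for a universal constant $c>0$; this is the deep lower bound that is special to Gaussian processes. For the left inequality I would apply the generic chaining upper bound (Theorem~5.1 in~\cite{Talagrand96}): whenever a centered process $(Y_t)$ has sub-Gaussian increments of the form $\P(|Y_s-Y_t|\geq \lambda) \leq 2 \exp(-\lambda^2/(2\, d(s,t)^2))$, then $\E\sup_t Y_t \leq C\, \gamma_2(\Omega,d)$. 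Our hypothesis on $(Y_t)$ is exactly this sub-Gaussian increment condition, so the bound applies verbatim. Concatenating the two estimates yields the lemma with $C = C_1/c$.

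The only substantive ingredient, and hence the only real ``obstacle,'' is the majorizing measure theorem itself, which we invoke as a black box; once it is granted, the proof is a one-line combination. A minor technical remark is that when $\Omega$ is uncountable, both suprema are to be understood as limits over finite subsets, and since both bounds above are monotone in $\Omega$ with universal constants, it suffices to verify the inequality on finite subsets. The reason to package the result in this asymmetric comparison form, rather than directly in terms of $\gamma_2$, is that in the proof of Theorem~\ref{RFIM2} we will need to dominate suprema of non-Gaussian random processes (arising from free-energy fluctuations of RFIM) by those of an auxiliary Gaussian process amenable to direct computation.
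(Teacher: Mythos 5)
Your proposal is correct and is precisely the argument the paper intends: the lemma is stated there as a consequence of Theorems 2.5 and 5.1 of Talagrand (1996), i.e.\ the majorizing measure lower bound $\gamma_2(\Omega,d)\leq C\,\E\sup_t X_t$ for the Gaussian process combined with the generic chaining upper bound $\E\sup_t Y_t\leq C\,\gamma_2(\Omega,d)$ for the sub-Gaussian process. Nothing further is needed.
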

Although Lemma~\ref{lem-Talagrand} is only important as a mathematical ingredient in the proof of Theorem~\ref{RFIM2}, the insight from Lemma~\ref{lem-Talagrand} is important for building our mental picture. This is because knowing this lemma we can then hope to bound the supremum of the change in free energy when flipping the disorder in a set (over all simply connected subsets containing the origin) by the supremum of a corresponding Gaussian process. With some further computation (see Lemma~\ref{lem-upper-bound-Delta} below), we then see that it can be bounded by the greedy lattice animal normalized by the boundary size (i.e., the supremum of the sum of the Gaussian disorder normalized by the boundary size over all simply connected subsets containing the origin) as studied in \cite{Chalker83, FFS84, DW20}. In \cite{Chalker83, FFS84}, it was shown that for $d\geq 3$ typically the greedy lattice animal normalized by the boundary size is $O(1)$ and the authors regarded this as the evidence for the existence of long range order. In some sense, our work will illustrate that this is indeed the fundamental reason behind the long range order. In contrast, it was shown in \cite{DW20} that for $d=2$ the greedy lattice animal normalized by the boundary size has a poly-log growth which governs the scaling of the correlation length. Our proof will also provide a deeper insight on its connection to the RFIM. In summary, the behavior for the greedy lattice animal normalized by the boundary size has a transition from $d=2$ to $d\geq 3$ and this offers a more accurate explanation for the RFIM transition predicted by Imry--Ma \cite{IM75}.

Finally, we remark that we will apply the same method to the random field Potts model, except that we rotate the spins and the external field instead of flipping their signs.

\section{Long range order in random field Ising model}\label{sec:RFIM}

We refer $\mathbb{P}$ and $\mathbb E$ to the probability measure and the expectation with respect to the external field $\{h_v\}$, and
we refer $\langle \cdot \rangle_{\mu^{\pm}_{T, \Lambda_N , \epsilon h }}$ to the average over the quenched measure $\mu^{\pm}_{T, \Lambda_N, \epsilon h}$. In addition,
we define the joint measure for $(h, \sigma)$ by $$\mathbb{Q}_{T, \Lambda_N, \epsilon}^{\pm}(h\in A, \sigma\in B) = \int_A \mu^{\pm}_{T, \Lambda_N, \epsilon h}(B) d\mathbb P(h)$$ for $A\subset {\mathbb R}^{\Lambda_N}$ and $B\subset \{-1, 1\}^{\Lambda_N}$ (note that this is different from the annealed measure on $\mathbb R^{\Lambda_N} \times \{-1, 1\}^{\Lambda_N}$ with density proportional to $\mathbb P(\{h_v\}) \cdot e^{-\frac{1}{T} H^{\pm, \Lambda_N, \epsilon h}(\sigma)}$). Also, for $A\subset \mathbb Z^d$, we let $\partial A = \{(u, v): u\sim v, u\in A, v\in A^c\}$ be the edge boundary of $A$. For a finite set $A$, we denote by $|A|$ the cardinality of $A$.
Since $T, \epsilon, N$ will be fixed throughout the proof, we will drop them from the superscripts/subscripts for notation convenience.

\subsection{Upper tail on free energy difference}
For $A\subset \mathbb Z^d$, we define
$$h^A_v = \begin{cases}
h_v, \mbox{ if } v\in A^c,\\
-h_v, \mbox{ if } v\in A;
\end{cases}
\mbox{ and }
\sigma^A_v =  \begin{cases}
\sigma_v, \mbox{ if } v\in A^c,\\
-\sigma_v, \mbox{ if } v\in A\,.
\end{cases}
$$  We further define  the free energy difference by
\begin{equation}\label{eq-def-Delta}
\Delta_A(h) = \mathcal F^+(h) - \mathcal F^+(h^A) \overset{\eqref{eq-def-free-energy}}{=} - T \log \mathcal Z^+(h) + T \log \mathcal Z^+(h^A)\,.
\end{equation}
The following lemma gives an upper bound on $\Delta_A(h)$.
\begin{lemma}\label{lem-upper-bound-Delta}
For any $A, A'\subset \Lambda_N$ and $\lambda > 0$, we have
\begin{align}
\P\big(|\Delta_A(h)| \geq \lambda \mid \{h_v: v\in A^c\}\big) &\leq 2 e^{-\frac{\lambda^2}{8 \epsilon^2 |A|}}\,, \label{eq-Delta-one-point}\\
\mathbb P\big(|\Delta_A(h) - \Delta_{A'}(h)| \geq \lambda \mid \{h_v: v\in (A \cup A')^c\}\big) &\leq 2 e^{-\frac{\lambda^2}{8 \epsilon^2 |A\oplus A'|}}\,, \label{eq-Delta-two-point}
\end{align}
where $A \oplus A'$ is the symmetric difference between $A$ and $A'$.
\end{lemma}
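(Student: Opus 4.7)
The plan is to combine the Gaussian Lipschitz (Borell--Tsirelson--Ibragimov--Sudakov) concentration inequality with the sign-symmetry of the Gaussian law. Recall that if $f\colon \mathbb R^n \to \mathbb R$ is $L$-Lipschitz in the Euclidean norm and $Z$ is a standard Gaussian vector in $\mathbb R^n$, then $\P(|f(Z) - \E f(Z)| \geq \lambda) \leq 2 e^{-\lambda^2/(2L^2)}$. Applied conditionally, both parts of the lemma reduce to two elementary checks: a coordinate-wise gradient bound on the appropriate difference of free energies that yields the right Lipschitz constant, and the vanishing of its conditional expectation via Gaussian symmetry.

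For \eqref{eq-Delta-one-point}, I would view $\Delta_A(h)$ as a function of the $|A|$ free variables $\{h_v : v \in A\}$ with $\{h_v : v \in A^c\}$ held fixed. Differentiating \eqref{def_h}--\eqref{def_z} gives the standard identity $\partial_{h_v}\mathcal F^+(h) = -\epsilon \langle \sigma_v\rangle_{\mu^+_{T,\Lambda_N,\epsilon h}}$, so $|\partial_{h_v}\mathcal F^+(h)| \leq \epsilon$; the chain rule then gives $\partial_{h_v}\mathcal F^+(h^A) = +\epsilon\langle \sigma_v\rangle_{\mu^+_{T,\Lambda_N,\epsilon h^A}}$ for $v \in A$. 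Hence $|\partial_{h_v}\Delta_A(h)| \leq 2\epsilon$ for each $v \in A$, and $\Delta_A$ is $2\epsilon\sqrt{|A|}$-Lipschitz in the free variables. For the conditional mean, conditional on $\{h_v : v \in A^c\}$ the vector $\{h_v : v \in A\}$ is symmetric about $0$, so $h$ and $h^A$ are conditionally equidistributed and $\E[\Delta_A(h)\mid h_{A^c}] = 0$. Conditional Gaussian concentration then delivers \eqref{eq-Delta-one-point}.

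For \eqref{eq-Delta-two-point}, I would rewrite $\Delta_A(h) - \Delta_{A'}(h) = \mathcal F^+(h^{A'}) - \mathcal F^+(h^{A})$ and condition on \emph{both} $\{h_v : v \in (A\cup A')^c\}$ and $\{h_v : v \in A\cap A'\}$, leaving the $|A\oplus A'|$ variables $\{h_v : v \in A \oplus A'\}$ free. For $v \in A\oplus A'$ the two configurations $h^A$ and $h^{A'}$ have opposite signs at $v$, so the chain rule produces $|\partial_{h_v}(\mathcal F^+(h^{A'}) - \mathcal F^+(h^A))| \leq 2\epsilon$, making the function $2\epsilon\sqrt{|A\oplus A'|}$-Lipschitz in the free coordinates. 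Flipping the signs of $\{h_v : v \in A\setminus A'\}$ both preserves the conditional law of $\{h_v : v \in A\oplus A'\}$ and exchanges $h^A \leftrightarrow h^{A'}$, so the two free energies have equal conditional means and $\E[\Delta_A(h)-\Delta_{A'}(h)\mid h_{A\cap A'}, h_{(A\cup A')^c}] = 0$. Gaussian concentration in the $|A\oplus A'|$ free coordinates, followed by averaging out $\{h_v : v \in A\cap A'\}$, yields \eqref{eq-Delta-two-point}.

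I do not expect a serious obstacle; the only subtlety worth flagging is the choice of conditioning in the second part. Conditioning only on $\{h_v : v \in (A\cup A')^c\}$, as phrased in the lemma, would leave the coordinates in $A\cap A'$ contributing a $2\epsilon$-sized derivative to $\mathcal F^+(h^{A'}) - \mathcal F^+(h^A)$, blowing the Lipschitz constant up to $2\epsilon\sqrt{|A\cup A'|}$ and losing the desired $|A\oplus A'|$ scaling. Enlarging the conditioning to include $\{h_v : v \in A\cap A'\}$ freezes those coordinates out; the centering and the final averaging step are then routine.
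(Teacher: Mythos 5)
Your proof of \eqref{eq-Delta-one-point} is the same as the paper's: the coordinatewise gradient bound $|\partial_{h_v}\Delta_A(h)|\le 2\epsilon$ for $v\in A$, conditional centering by sign symmetry, and conditional Gaussian concentration. For \eqref{eq-Delta-two-point} you take a genuinely different route. The paper observes that, conditionally on $\{h_v: v\in (A\cup A')^c\}$, the pair $(h^{A},h^{A'})$ is equidistributed with $(h^{A\oplus A'},h)$, so $\Delta_A(h)-\Delta_{A'}(h)$ has the same conditional law as $\Delta_{A\oplus A'}(h)$ and \eqref{eq-Delta-two-point} becomes a literal instance of \eqref{eq-Delta-one-point} with no new computation. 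You instead rerun the concentration argument directly for $\mathcal F^+(h^{A'})-\mathcal F^+(h^{A})$ after additionally freezing $\{h_v: v\in A\cap A'\}$; this correctly isolates the $|A\oplus A'|$ free coordinates, and your remark about why the extra conditioning is necessary is exactly right --- the coordinates in $A\cap A'$ contribute derivatives $\epsilon\bigl(\langle\sigma_v\rangle_{\mu^+_{T,\Lambda_N,\epsilon h^{A'}}}-\langle\sigma_v\rangle_{\mu^+_{T,\Lambda_N,\epsilon h^{A}}}\bigr)$ which are bounded by $2\epsilon$ but not zero, so conditioning only as in the statement would inflate the Lipschitz constant to $2\epsilon\sqrt{|A\cup A'|}$. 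Both routes are valid and of comparable length; the paper's is slightly slicker.

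There is one small but genuine error in your centering step for \eqref{eq-Delta-two-point}: flipping the signs of $\{h_v: v\in A\setminus A'\}$ does \emph{not} exchange $h^{A}$ and $h^{A'}$; a coordinatewise check shows it sends $(h^{A},h^{A'})$ to $(h^{A\cap A'}, h^{A\cup A'})$. The involution you want is the sign flip on all of $A\oplus A'$: writing $\tilde h = h^{A\oplus A'}$, one verifies $\tilde h^{A}=h^{A'}$ and $\tilde h^{A'}=h^{A}$ on each of $A\setminus A'$, $A'\setminus A$, $A\cap A'$ and $(A\cup A')^c$, and this flip preserves the conditional law of the free coordinates, so $\mathcal F^+(h^{A'})-\mathcal F^+(h^{A})$ is indeed conditionally centered. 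With that one-line correction your argument is complete.
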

\begin{proof}
We first prove \eqref{eq-Delta-one-point}.
By symmetry of Gaussian random variables,
\begin{equation}\label{eq-Delta-expectation}
\mathbb{E} (\Delta_A(h)  \mid \{h_v: v\in A^c\}) =0\,.
\end{equation}
Furthermore, for any $v\in A$, by \eqref{def_z} and \eqref{def_h}
\begin{align}\label{eq-Delta-Lip}
\Big| \frac{\partial}{\partial_{h_{v}}} \Delta_A(h) \Big|&= \Big|-\frac{\sum_{\sigma } \epsilon   \sigma_{v} e^{-\frac{1}{T} H^{+,  \Lambda_N, \epsilon h}(\sigma)}}{\mathcal{Z}^+(h)} -  \frac{\sum_{\sigma }  \epsilon  \sigma_{v} e^{-\frac{1}{T}  H^{+,  \Lambda_N, \epsilon h^A}(\sigma)} }{\mathcal{Z}^+(h^A)} \Big| \nonumber\\
&=\epsilon \Big|   \langle \sigma_{v} \rangle_{\mu^+_{T, \Lambda_N, \epsilon h}}+\langle \sigma_{v} \rangle_{\mu^+_{T, \Lambda_N, \epsilon h^A}}   \Big| \leq  2 \epsilon.
\end{align}
Now, by the Gaussian concentration inequality (see \cite{Borell75, SudakovTsirelson74}, and see also \cite[Theorem 2.1]{Adler90} and \cite[Theorem 3.25]{Van16}), we obtain \eqref{eq-Delta-one-point}.

We next prove \eqref{eq-Delta-two-point}. In the rest of the proof, we consider probability measures conditioned on $\{h_v: v\in (A\cup A')^c\}$.
Since $\Delta_A(h) - \Delta_{A'}(h) = \mathcal F^+(h^{A'}) - \mathcal F^+(h^{A})$ and since $(h^A, h^{A'})$ has the same conditional distribution as $(h^{A\oplus A'}, h)$, we obtain that
$\Delta_A(h) - \Delta_{A'}(h)$ has the same conditional distribution as $\mathcal F^+(h) - \mathcal F^+(h^{A\oplus A'}) = \Delta_{A\oplus A'}(h)$.
Thus, we get \eqref{eq-Delta-two-point} from \eqref{eq-Delta-one-point}.
\end{proof}

\subsection{Proof of Theorem~\ref{RFIM3}}

By \eqref{def_mu}, under the law $\mathbb{Q}^+$, the joint density $\nu^+$ of $h$ and $\sigma$ is given by
\begin{equation}
\label{def_p}
\nu^+(h , \sigma)=\prod_{u \in \Lambda_N} \left(\frac{1}{\sqrt{2\pi}}e^{-\frac{1}{2}h_u^2} \right) \times \frac{e^{-\frac{1}{T} H^{+, \Lambda_N, \epsilon h}(\sigma)}}{\mathcal{Z}^+(h)}.
\end{equation}
Note that the density associated with $h$ is a continuous density and the one associated with $\sigma$ is a discrete probability mass.

We now set up the framework for our Peierls argument. Let $\mathfrak A$ be the collection of all simply connected subsets $A\subset \Lambda_N$ with $o\in A$. We wish to bound $|\{A \in \mathfrak A: |\partial A| = n\}|$.  We say two edges $(u_1, u_2)$ and $(v_1, v_2)$ are neighboring each other if $u_i$ and $v_j$ has $\ell_1$-distance at most 2 for some $i, j\in \{1, 2\}$. With this neighboring relation, we see that $\partial A$ forms a connected component and as a result we get that
\begin{equation}\label{eq-mathfrak-A-size}
|\{A \in \mathfrak A: |\partial A| = n\}| \leq (2dn)^d (16d^3)^{2n}\,,
\end{equation}
(note that the bound above does not depend on $N$).
This is because the right hand side above corresponds to an upper bound on the number of trees (on the edges of $\mathbb Z^d$) of $n$ edges with a starting edge in $\Lambda_n$, and such trees can be enumerated by a depth-first search process---the search process has $2n$ steps (since it visits each edge twice), where we have $(2dn)^d$ choices for the initial step and each later step has at most $16d^3$ choices. (One may also see \cite{Ruelle69, LM98} for sharper estimates on $|\mathfrak A|$ although these are insignificant for us.) For $\sigma\in \{-1, 1\}^{\Lambda_N}$, if $\sigma_o = -1$ we let $\mathcal A_\sigma$ be the simply connected component that contains all vertices either in the sign component of the origin or enclosed by the sign component (i.e., $\mathcal A_\sigma$ is the simply connected component enclosed by the outmost boundary of the sign component at the origin), and if $\sigma_o = 1$ we let $\mathcal A_\sigma = \emptyset$. Note that when $\sigma_o = -1$, we have $\mathcal A_\sigma \subset \Lambda_N$ (since the boundary condition is plus and we make the convention that $\sigma_u = 1$ if $u \not \in \Lambda_N$) and thus $\mathcal A_\sigma \in \mathfrak A$. Our mental picture is that for our generalized Peierls argument, we consider the mapping $(h, \sigma) \mapsto (h^{\mathcal A_{\sigma}}, \sigma^{\mathcal A_\sigma})$ (so in particular the mapping flips the spin at the origin if it is initially a minus spin). For any $(u,v) \in \partial \mathcal{A}_\sigma$, we have $\sigma_u \sigma_v = -1$ and $\sigma_u^{\mathcal A_\sigma} \sigma_v^{\mathcal A_\sigma} = 1$. Thus, by \eqref{def_h} and \eqref{def_p},
\begin{equation}
\label{key}
\frac{\nu^+(h,\sigma)}{\nu^+(h^{\mathcal A_\sigma},\sigma^{\mathcal A_\sigma})}=e^{- \frac{2}{T} |\partial A|} \frac{\mathcal{Z}^+(h^{\mathcal A_\sigma})}{\mathcal{Z}^+(h)}\,.
\end{equation}
The first term on the right hand side above decays exponentially in the boundary size. We now bound the second term. By Lemma~\ref{lem-upper-bound-Delta} (also recall \eqref{eq-Delta-expectation}), we can adapt the multiscale analysis in \cite[Page 866--Page 869]{FFS84} or \cite[Page 116--Page 118]{Bovier06} and show that
\begin{equation}
\label{ising_bound_2}
\mathbb{P}\Big(\sup_{A\in \mathfrak A} \frac{|\Delta_A(h)|}{|\partial A|} \geq 1\Big) \leq \exp (-c/\epsilon^2)\,,
\end{equation}
where $c>0$ is a small constant and $\epsilon \leq c$.
The proof in \cite{FFS84} is based on the coarse graining method, which roughly speaking considers a simply connected set as a disjoint union of ``connected'' dyadic boxes of varying sizes.
Note that although the definition of $\Delta_A(h)$ is different from $F_A(h)$ in \cite{FFS84}, Lemma~\ref{lem-upper-bound-Delta} shows that $\Delta_A(h)/4$ satisfies the condition \cite[Eq.(10)]{FFS84} which is the only property of $F_A(h)$ required to prove \eqref{ising_bound_2} for small $\epsilon$. As a result, the adaption of the proof is essentially verbatim (note that the proof in \cite{FFS84} was only written for $d=3$ but this can be easily extended to all $d\geq 3$ as noted in \cite{Bovier06}).
From \eqref{ising_bound_2} (also recall \eqref{eq-def-Delta}) we immediately get that for $\epsilon \leq c$
\begin{equation}
\label{ising_bound}
\mathbb{P}(\mathcal E^c) \leq \exp (-c/\epsilon^2) \mbox{ where } \mathcal E = \Big\{ \sup_{A\in \mathfrak A} \frac{\mathcal{Z}^+(h^A)}{\mathcal{Z}^+(h)} \leq e^{|\partial A|/T} \Big\}\,.
\end{equation}

We are now ready to provide the proof for Theorem~\ref{RFIM3}. Note that $\mathcal E$ is an event measurable with respect to $\{h_v: v\in \Lambda_N\}$. We have
\begin{align*}
\mathbb{Q}^+(\sigma_o=-1) &\leq \mathbb{Q}^+(\{\sigma_o=-1 \} \cap \mathcal{E}) + \mathbb{Q}^+(\mathcal{E}^c) \\
& \leq \sum_{A\in \mathfrak A} \int_{h\in \mathcal E}\sum_{\sigma: \mathcal A_\sigma = A} \nu^+ (h, \sigma) dh +\mathbb{P}(\mathcal{E}^c) \qquad \mbox{(since $\mathcal A_\sigma\in \mathfrak A$ if $\sigma_o = -1$)} \\
&\leq \sum_{A \in \mathfrak A} \frac{\int_{h \in \mathcal E}\sum_{\sigma: \mathcal A_\sigma = A} \nu^+(h,\sigma) dh}{\int_{h\in \mathcal E} \sum_{\sigma: \mathcal A_\sigma = A} \nu^+(h^A,\sigma^A) dh} + \mathbb{P}(\mathcal E^c) \qquad \mbox{ (since $\int_{h} \sum_\sigma \nu^+(h^A,\sigma^A) dh = 1$)}\,,
\end{align*}
where the point of the last step is to compare probabilities along the mapping $(h, \sigma) \mapsto (h^{\mathcal A_\sigma}, \sigma^{\mathcal A_\sigma})$. Continuing the analysis, we obtain that for $T < c$
\begin{align}
\mathbb{Q}^+(\sigma_o=-1)&\leq \sum_{A\in \mathfrak A}\sup_{h \in \mathcal{E}, \mathcal A_\sigma = A} \frac{\nu^+(h, \sigma)}{\nu^+(h^A,\sigma^A)}  +\mathbb{P}(\mathcal{E}^c) \nonumber\\
&= \sum_{A\in \mathfrak A} \sup_{h\in \mathcal E}\Big( e^{-\frac{2}{T}|\partial A|} \frac{\mathcal{Z}^+(h^A)}{\mathcal{Z}^+(h)} \Big) +\mathbb{P}(\mathcal{E}^c)  \qquad \mbox{ (by \eqref{key})}\nonumber \\
&\leq \sum_{n \geq 1} \sum_{A\in \mathfrak A, |\partial A|=n} e^{-\frac{2}{T} |\partial A|+ \frac{1}{T} |\partial A|} +\mathbb{P}(\mathcal{E}^c) \qquad \mbox{ (by \eqref{ising_bound})} \nonumber\\
&\leq \sum_{n \geq 1} (2dn)^d (16d^3)^{2n} e^{-  n/T} +\mathbb{P}(\mathcal{E}^c)  \qquad \mbox{ (by \eqref{eq-mathfrak-A-size})} \nonumber\\
&\leq e^{-c/T}+ e^{-c/ \epsilon^2}   \qquad \mbox{(by \eqref{ising_bound} and that $T$ is assumed to be small)}\label{eq-RFIM3}
\end{align}
where in the last step we have decreased the value of $c$. Now a simple application of Markov's inequality completes the proof of Theorem~\ref{RFIM3} (and we adjust the value of $c$ again).

\subsection{Proof of Theorem~\ref{RFIM2}}\label{sec:RFIM2d}
For any $A\subset \mathbb Z^d$, let $H_A= \sum_{u \in A} h_u$. We assume that $N \leq e^{\frac{c\epsilon^{-4/3}}{ \log(1/\epsilon)}}$ where $c$ is a small constant to be chosen.
For $d=2$, in order to bound $\sup_{A\in \mathfrak A} \frac{\Delta_A(h)}{|\partial A|}$, we still wish to use Lemma~\ref{lem-upper-bound-Delta}. However, for $d=2$ it is substantially more complicated and the bound on the corresponding Gaussian process $\{\frac{H_A}{|\partial A|}: A\in \mathfrak A\}$ was obtained in \cite{DW20} via a sophisticated multi-scale analysis argument, which employs Gaussian properties in a significant way (and thus it is difficult to just adapt the proof). As a result, in this case, we will apply Lemma~\ref{lem-Talagrand}. One subtlety is that our process $\{\frac{\Delta_A(h)}{|\partial A|}: A\in \mathfrak A\}$ is normalized by the boundary size. To address this, we partition all the sets into collections of sets with comparable boundary sizes. For $k \geq 0$, we write $\mathcal{B}_k = \{ A\in \mathfrak A: |\partial A| \in [2^k, 2^{k+1}]\}$.  Then, by \cite[Theorem 1.4]{DW20}
$$
\mathbb{E}\Big(\sup_{A \in \mathcal{B}_k} H_A\Big) \leq 2^{k+1} \mathbb{E}\Big(\max\Big\{\sup_{A \in \mathcal{B}_k} \frac{H_A}{|\partial A|}, 0\Big\}\Big) \leq C 2^k (\log N \log \log N)^{3/4}\,,
$$
where $C>0$ is a constant.
Note that $\E(H_A - H_{A'})^2 = |A \oplus A'|$. For notation convenience, we let $\mathbb P_k$ and $\mathbb E_k$ be the measure and the expectation respectively with respect to $\P(\cdot \mid \{h_v: v\in \Lambda_{2^{k+1}}^c\})$.
 By Lemmas~\ref{lem-upper-bound-Delta} and \ref{lem-Talagrand} (also recall \eqref{eq-Delta-expectation}), we get
$$
\mathbb{E}_k \Big(\sup_{A \in \mathcal{B}_k} \Delta_A(h) \Big) \leq C \mathbb{E} \Big( \sup_{A \in \mathcal{B}_k} 2\epsilon H_A \Big) \leq C \epsilon 2^k (\log N \log \log N)^{3/4} \,,
$$where $C>0$ is a constant whose value may be increased from the last occurrence.
Therefore, by the assumption that $N \leq e^{\frac{c\epsilon^{-4/3}}{ \log(1/\epsilon)}}$ we can choose a sufficiently small constant $c>0$ such that
$$
\mathbb{E}_k \Big(\sup_{A \in \mathcal{B}_k} \Delta_A(h) \Big) \leq 2^{k-1}.
$$
By \eqref{eq-Delta-Lip}, we see that conditioned on the values of $\{h_v: v\in \Lambda_{2^{k+1}}^c$\}, for each $A\in \mathcal B_k$ we have that $\Delta_A(h)$ is a $(2\epsilon)$-Lipschitz function of $\{h_v: v\in \Lambda_{2^{k+1}}\}$ and thus so is $\sup_{A\in \mathcal B_k}\Delta_A(h)$. By the Gaussian concentration inequality (see \cite{Borell75, SudakovTsirelson74}, and see also \cite[Theorem 2.1]{Adler90} and \cite[Theorem 3.25]{Van16})
$$
\mathbb{P}_k\Big(\sup_{A \in \mathcal{B}_k} \frac{\Delta_A(h)}{|\partial A|} \geq 1\Big) \leq \mathbb{P}_k\Big(\sup_{A \in \mathcal{B}_k} \Delta_A(h) \geq 2^k \Big) \leq \exp(-c/{\epsilon^2}).
$$
Taking expectation over the preceding inequality and summing over $k$ (noting $\mathcal{B}_k =\emptyset$ for $k \geq 10 \log N = O(\epsilon^{-4/3})$), we have for $\epsilon < c$
\begin{align*}
\mathbb{P}(\mathcal E^c) \leq \exp(-c/\epsilon^2), \quad \mbox{ where }  \mathcal E = \Big\{\sup_{A\in \mathfrak A} \frac{\Delta_A(h)}{|\partial A|} \leq 1 \Big\} \overset{\eqref{eq-def-Delta}}{=} \Big\{ \sup_{A\in \mathfrak A} \frac{\mathcal{Z}^+(h^A)}{\mathcal{Z}^+(h)} \leq e^{|\partial A|/T} \Big\}\,.
\end{align*}
(Note that we have decreased the value of $c$ from the last occurrence.)

We are now ready to prove Theorem \ref{RFIM2}.
Fix $\mathsf m \in (0,1)$. For $\epsilon, T\leq c$ (recall that $N \leq e^{\frac{c\epsilon^{-4/3}}{ \log(1/\epsilon)}}$), we can repeat the derivation of \eqref{eq-RFIM3} verbatim and obtain that
$$
\mathbb{Q}^\pm[\sigma_o=\mp] \leq e^{-c/T} +  e^{-c/\epsilon^2}\,.
$$
Thus, $m(T, \Lambda_N, \epsilon)  \geq \mathsf m$ for $T, \epsilon$ small enough. This completes the proof of the theorem.

\section{Long range order for random field Potts model}

In this section we prove Theorem~\ref{RFPM3}. We first make some definitions similar to those at the beginning of Section~\ref{sec:RFIM} with appropriate modifications. Fix $d, \mathsf q\geq 3$ and assume that $T, \epsilon \leq c$ for a small constant $c>0$ (to be chosen). We refer $\mathbb{P}$ and $\mathbb E$ to the measure and the expectation respectively with respect to the external field $\{h_{\mathsf k, v}: v\in \mathbb Z^d, \mathsf 1\leq \mathsf k \leq \mathsf q\}$. Similar to the case for RFIM, we let $\mathbb{Q}_{T, \Lambda_N, \epsilon}^{\mathsf k}$ be the joint measure for $(h, \sigma)$, so that for any $A\subset (\mathbb R^{\mathsf q})^{\Lambda_N}$ and $B\subset \{\mathsf 1, \ldots, \mathsf q\}^{\Lambda_N}$
$$\mathbb{Q}_{T, \Lambda_N, \epsilon}^{\mathsf k} (h\in A, \sigma\in B) =  \int_A \mu_{T, \Lambda_N, \epsilon h}^{\mathsf k}(B) d \mathbb P(h)\,.$$
Since $T, \epsilon, N$ will be fixed throughout the proof, (similarly to Section~\ref{sec:RFIM}) we will drop them from the superscripts/subscripts for notation convenience.

By symmetry, in order to prove Theorem~\ref{RFPM3} it suffices to prove
\begin{equation}\label{eq-RFPM}
\mathbb Q^{\mathsf 1}(\sigma_o = \mathsf 2) \leq  e^{-c/T} +  e^{-c/\epsilon^2}\,,
\end{equation}
since this immediately implies an upper bound on $\mathbb Q^{\mathsf 1}(\sigma_o \neq \mathsf 1)$ and thus an application of Markov's inequality yields Theorem~\ref{RFPM3} (where we adjust the value of $c$).

We now turn to the proof of \eqref{eq-RFPM}. We define a rotation $\theta$ on $\{\mathsf 1, \ldots, \mathsf q\}$ by $\theta(\mathsf k) = \mathsf k- \mathsf 1$ for $\mathsf k \in \{\mathsf 2, \ldots, \mathsf q\}$ and $\theta(\mathsf 1) = \mathsf q$ (so $\theta$ plays a similar role as flipping the spin for the Ising model). For any $\mathsf 1\leq \mathsf j \leq \mathsf q$ and any $A\subset \mathbb Z^d$, define
$$h^{A, \mathsf j}_{\mathsf k, v} = \begin{cases}
h_{\mathsf k, v},  \mbox{ if } v\in A^c,\\
h_{\theta^{-\mathsf j}(\mathsf k), v}, \mbox{ if } v\in A;
\end{cases}
\mbox{ and }
\sigma^{A, \mathsf j}_v =  \begin{cases}
\sigma_v, \mbox{ if } v\in A^c,\\
\theta^{\mathsf j}(\sigma_v), \mbox{ if } v\in A\,.
\end{cases}
$$  We further define  the free energy difference by
\begin{equation}\label{eq-def-Delta-Potts}
\Delta_{A, \mathsf j}(h) = \mathcal F^{\mathsf 1}(h) - \mathcal F^{\mathsf 1}(h^{A, \mathsf j}) \overset{\eqref{eq-def-free-energy}}{=} -T \log \mathcal Z^{\mathsf 1}(h) + T \log \mathcal Z^{\mathsf 1}(h^{A, \mathsf j})\,.
\end{equation}
We first prove an analogue of Lemma~\ref{lem-upper-bound-Delta}
\begin{lemma}
\label{lem-upper-bound-Delta-RFPM}
For any $A, A'\subset \Lambda_N$, $\mathsf 1\leq \mathsf j\leq \mathsf q$ and $\lambda > 0$, we have
\begin{align}
\P(|\Delta_{A, \mathsf j}(h)| \geq \lambda \mid \{h_{\mathsf k, v}: \mathsf 1\leq \mathsf k\leq \mathsf q, v\in A^c\}) &\leq 2 e^{-\frac{\lambda^2}{8 \epsilon^2 \mathsf q |A|}}\,, \label{eq-Delta-one-point-RFPM}\\
\mathbb P(|\Delta_{A, \mathsf j}(h) - \Delta_{A', \mathsf j}(h)| \geq \lambda \mid \{h_{\mathsf k, v}: \mathsf 1\leq \mathsf k\leq \mathsf q, v\in (A \cup A')^c\}) &\leq 2 e^{-\frac{\lambda^2}{8 \epsilon^2 \mathsf q |A\oplus A'|}}\,, \label{eq-Delta-two-point-RFPM}
\end{align}
where $A \oplus A'$ is the symmetric difference between $A$ and $A'$.
\end{lemma}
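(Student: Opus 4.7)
The plan is to follow the structure of Lemma~\ref{lem-upper-bound-Delta}, combining the Gaussian concentration inequality with a mean-zero argument coming from the permutation symmetry of the i.i.d.\ disorder. The two features that must be accommodated are that each site carries $\mathsf q$ independent Gaussian components (so the ambient Euclidean space has dimension $\mathsf q |A|$ in place of $|A|$) and that the involution $h \mapsto -h$ is replaced by the rotation $\theta^{\mathsf j}$, which has order $\mathsf q$.

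For \eqref{eq-Delta-one-point-RFPM}, since $\theta^{\mathsf j}$ is a bijection on $\{\mathsf 1, \ldots, \mathsf q\}$ and $(h_{\mathsf 1, v}, \ldots, h_{\mathsf q, v})$ are i.i.d.\ standard Gaussians at each $v$, the field $h^{A, \mathsf j}$ has the same conditional law as $h$ given $\{h_{\mathsf k, v}: v \in A^c\}$, so $\mathbb E[\Delta_{A, \mathsf j}(h) \mid \{h_{\mathsf k, v}: v \in A^c\}] = 0$. For the Lipschitz bound, a direct computation using \eqref{def_h_potts} gives $\partial \mathcal F^{\mathsf 1}(h)/\partial h_{\mathsf k, v} = -\epsilon \langle \mathbf 1_{\sigma_v = \mathsf k} \rangle_{\mu^{\mathsf 1}_{T, \Lambda_N, \epsilon h}}$, while the chain rule (using $\partial h^{A, \mathsf j}_{\mathsf k', v}/\partial h_{\mathsf k, v} = \mathbf 1_{\mathsf k' = \theta^{\mathsf j}(\mathsf k)}$ for $v \in A$) yields $\partial \mathcal F^{\mathsf 1}(h^{A, \mathsf j})/\partial h_{\mathsf k, v} = -\epsilon \langle \mathbf 1_{\sigma_v = \theta^{\mathsf j}(\mathsf k)} \rangle_{\mu^{\mathsf 1}_{T, \Lambda_N, \epsilon h^{A, \mathsf j}}}$. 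Hence each of the $\mathsf q |A|$ partial derivatives indexed by $v \in A$ is at most $2\epsilon$ in absolute value, the squared gradient norm is at most $4\epsilon^2 \mathsf q |A|$, and the Gaussian concentration inequality delivers \eqref{eq-Delta-one-point-RFPM}.

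For \eqref{eq-Delta-two-point-RFPM}, writing $\Delta_{A, \mathsf j}(h) - \Delta_{A', \mathsf j}(h) = \mathcal F^{\mathsf 1}(h^{A', \mathsf j}) - \mathcal F^{\mathsf 1}(h^{A, \mathsf j})$, a naive Lipschitz bound would pick up contributions from all of $A \cup A'$, because (unlike the $\mathsf q = 2$ case) the Gibbs averages at a site $v \in A \cap A'$ involve two distinct measures whose external fields still differ on $A \oplus A'$. To recover the target $|A \oplus A'|$ scale I introduce the auxiliary configuration $\tilde h := h^{A \cap A', \mathsf j}$, which has the same conditional distribution as $h$ given $\{h_{\mathsf k, v}: v \in (A \cup A')^c\}$ by the permutation symmetry used above. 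A case-by-case verification gives $h^{A, \mathsf j} = \tilde h^{A \setminus A', \mathsf j}$ and $h^{A', \mathsf j} = \tilde h^{A' \setminus A, \mathsf j}$, so conditionally on $\{h_{\mathsf k, v}: v \in (A \cup A')^c\}$ the difference $\Delta_{A, \mathsf j}(h) - \Delta_{A', \mathsf j}(h)$ has the same distribution as
\[
\tilde G(h) := \mathcal F^{\mathsf 1}(h^{A' \setminus A, \mathsf j}) - \mathcal F^{\mathsf 1}(h^{A \setminus A', \mathsf j}).
\]
Since $A \setminus A'$ and $A' \setminus A$ are disjoint subsets of $A \oplus A'$, further conditioning on $\{h_{\mathsf k, v}: v \in A \cap A'\}$ turns $\tilde G$ into a function of $\{h_{\mathsf k, v}: v \in A \oplus A'\}$ alone, whose partials are again bounded by $2\epsilon$ each and whose conditional mean vanishes by the same symmetry. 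The Gaussian concentration inequality then yields the stated bound conditionally on $\{h_{\mathsf k, v}: v \in (A \oplus A')^c\}$, and since this bound is deterministic we may integrate out the values on $A \cap A'$ to obtain \eqref{eq-Delta-two-point-RFPM}.

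The main point I anticipate needing care is the auxiliary-configuration step for the two-point bound: the slick RFIM identity $(h^A, h^{A'}) \stackrel{d}{=} (h^{A \oplus A'}, h)$ has no literal analogue for $\mathsf q \ge 3$ because composing two rotations on overlapping sets produces the element $\theta^{-2\mathsf j}$ on $A \cap A'$. The substitute $\tilde h = h^{A \cap A', \mathsf j}$ works precisely because it ``peels off'' the portion of the rotation shared by $h^{A, \mathsf j}$ and $h^{A', \mathsf j}$.
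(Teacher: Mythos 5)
Your proof is correct, and for \eqref{eq-Delta-one-point-RFPM} it is essentially identical to the paper's: conditional mean zero by the exchangeability of $(h_{\mathsf 1,v},\ldots,h_{\mathsf q,v})$, partial derivatives bounded by $2\epsilon$ in each of the $\mathsf q|A|$ coordinates, then Gaussian concentration. For \eqref{eq-Delta-two-point-RFPM} you genuinely diverge from the paper, and your instinct here is well placed: the paper simply asserts that $(h^{A,\mathsf j},h^{A',\mathsf j})$ has the same conditional law as $(h^{A\oplus A',\mathsf j},h)$ and reduces to the one-point bound, exactly mimicking the Ising identity. As you observe, writing $g=h^{A',\mathsf j}$ one finds $h^{A,\mathsf j}$ equals $g$ rotated by $\theta^{\mathsf j}$ on $A\setminus A'$ but by $\theta^{-\mathsf j}$ on $A'\setminus A$, so the pair is \emph{not} distributed as $(g^{A\oplus A',\mathsf j},g)$ once $\mathsf q\geq 3$ and $A\cap A'\neq\emptyset$; the paper's intermediate claim is an artifact of the $\mathsf q=2$ case. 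Your substitute --- peeling off the shared rotation via $\tilde h=h^{A\cap A',\mathsf j}$ so that $h^{A,\mathsf j}=\tilde h^{A\setminus A',\mathsf j}$ and $h^{A',\mathsf j}=\tilde h^{A'\setminus A,\mathsf j}$, and then running the concentration argument directly on $\mathcal F^{\mathsf 1}(h^{A'\setminus A,\mathsf j})-\mathcal F^{\mathsf 1}(h^{A\setminus A',\mathsf j})$ as a mean-zero $2\epsilon$-Lipschitz function of the $\mathsf q|A\oplus A'|$ variables on $A\oplus A'$ --- is a correct repair and yields the same constant. (Note that the paper's final bound is still true, since one can always apply concentration directly to the difference as you do; it is only the stated distributional identity and the literal reduction to \eqref{eq-Delta-one-point-RFPM} that do not survive for $\mathsf q\geq 3$.) Your handling of the conditioning --- proving the bound given $\{h_{\mathsf k,v}:v\in(A\oplus A')^c\}$ and integrating out $A\cap A'$ by the tower property --- is also fine.
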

\begin{proof}
We first prove \eqref{eq-Delta-one-point-RFPM}. Fix $\mathsf j$.
By symmetry (more precisely the identical law of $h_{\mathsf k, v}$ for $\mathsf 1\leq \mathsf k\leq \mathsf q$ and $v\in \mathbb Z^d$),
\begin{equation}\label{eq-Delta-expectation-Potts}
\mathbb{E} (\Delta_{A, \mathsf j}(h) \mid \{h_{\mathsf k, v}: \mathsf 1\leq \mathsf k\leq \mathsf q, v\in A^c\}) =0\,.
\end{equation} Furthermore, for any $v\in A$ and $\mathsf 1\leq \mathsf i\leq \mathsf q$, by \eqref{def_z_potts} and \eqref{def_h_potts}
\begin{equation}\label{eq-Delta-Lip-Potts}
\begin{split}
\Big| \frac{\partial}{\partial_{h_{\mathsf i, v}}} \Delta_{A, \mathsf j}(h) \Big|&= \Big|- \frac{\sum_{\sigma } \epsilon   \mathbf {1}_{\sigma_{v} = \mathsf i} e^{-\frac{1}{T} H^{\mathsf 1,\Lambda_N , \epsilon h}(\sigma)}}{\mathcal{Z}^{\mathsf 1}(h)} + \frac{\sum_{\sigma }  \epsilon  \mathbf{1}_{\theta^{- \mathsf j}(\sigma_{v}) = \mathsf i} e^{-\frac{1}{T} H^{\mathsf 1, \Lambda_N, \epsilon h^{A, \mathsf j}}(\sigma)} }{\mathcal{Z}^{\mathsf 1}(h^{A, \mathsf j})}  \Big|\\
&=\epsilon \Big| - \mu^{\mathsf 1}_{T, \Lambda_N,\epsilon h}(\sigma_v = \mathsf i) + \mu^{\mathsf 1}_{T, \Lambda_N, \epsilon h^{A, \mathsf j}}( \theta^{-\mathsf j}(\sigma_v) =\mathsf i) \Big| \leq  2 \epsilon.
\end{split}
\end{equation}
Now, by the Gaussian concentration inequality (see \cite{Borell75, SudakovTsirelson74}, and see also \cite[Theorem 2.1]{Adler90} and \cite[Theorem 3.25]{Van16}), we obtain \eqref{eq-Delta-one-point-RFPM}.

We next prove \eqref{eq-Delta-two-point-RFPM}. In the rest of the proof, we consider probability measures conditioned on $\{h_{\mathsf k, v}: \mathsf 1\leq \mathsf k\leq \mathsf q, v\in (A \cup A')^c\}$.
Since $\Delta_{A, \mathsf j}(h) - \Delta_{A', \mathsf j}(h) = \mathcal F^{\mathsf 1}(h^{A', \mathsf j}) - \mathcal F^{\mathsf 1}(h^{A, \mathsf j})$ and since $(h^{A, \mathsf j}, h^{A', \mathsf j})$ has the same conditional distribution as $(h^{A\oplus A', \mathsf j}, h)$, we obtain that
$\Delta_{A, \mathsf j}(h) - \Delta_{A', \mathsf j}(h)$ has the same conditional distribution as $\mathcal F^{\mathsf 1}(h) - \mathcal F^{\mathsf 1}(h^{A\oplus A', \mathsf j}) = \Delta_{A\oplus A', \mathsf j}(h)$.
Thus, we get \eqref{eq-Delta-two-point-RFPM} from \eqref{eq-Delta-one-point-RFPM}.
\end{proof}
We next prove a bound on the supremum for the free energy difference.
\begin{lemma}\label{lem-Delta-bound-RFPM}
There exists a constant $c = c(d, \mathsf q)> 0$ such that for $0 \leq \epsilon<c$ we have
$$\mathbb P(\mathcal E^c) \leq e^{-c/\epsilon^2} \mbox{ where }  \mathcal E = \Big\{\sup_{\mathsf 1\leq \mathsf j\leq \mathsf q, A \in \mathfrak A} \frac{\Delta_{A, \mathsf j}}{|\partial A|} \leq \frac{1}{2\mathsf q}\Big\} \overset{\eqref{eq-def-Delta-Potts}}{=} \Big\{\sup_{\mathsf 1\leq \mathsf j\leq \mathsf q, A \in \mathfrak A} \frac{\mathcal Z^{\mathsf 1}(h^{A, \mathsf j})}{\mathcal Z^{\mathsf 1}(h)} \leq e^{\frac{| \partial A|}{2\mathsf q T}}\Big\}\,.$$
\end{lemma}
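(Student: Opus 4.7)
The plan is to run the same multiscale coarse-graining analysis that produced \eqref{ising_bound_2} in the RFIM proof, separately for each rotation $\mathsf j\in\{\mathsf 1,\ldots,\mathsf q\}$, and then union bound over the $\mathsf q$ values of $\mathsf j$. The FFS84 argument needs only two ingredients from the random process under consideration: centering, and the two-point sub-Gaussian increment estimate \cite[Eq.(10)]{FFS84}. Both are supplied here by \eqref{eq-Delta-expectation-Potts} and Lemma~\ref{lem-upper-bound-Delta-RFPM}, so for each fixed $\mathsf j$ the argument applies verbatim to the process $\{\Delta_{A, \mathsf j}(h): A \in \mathfrak A\}$. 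The only two new cosmetic features relative to the RFIM setting are (i) the variance proxy in Lemma~\ref{lem-upper-bound-Delta-RFPM} is enlarged by a factor of $\mathsf q$ coming from the $\mathsf q$ independent disorder copies attached to each vertex, and (ii) the threshold on the normalized supremum is $1/(2\mathsf q)$ rather than $1$.

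To absorb these features, for each fixed $\mathsf j$ I would apply the FFS84 analysis to the rescaled process $2\mathsf q\,\Delta_{A,\mathsf j}(h)$, which by Lemma~\ref{lem-upper-bound-Delta-RFPM} is centered and has sub-Gaussian two-point fluctuations of variance proxy at most $C\mathsf q^3 \epsilon^2 |A\oplus A'|$ for a numerical constant $C$. Treating $C\mathsf q^3 \epsilon^2$ as the effective $\epsilon^2$ (which remains small provided $\epsilon < c(d,\mathsf q)$ for a suitably small constant depending on $d$ and $\mathsf q$), the coarse-graining argument of \cite{FFS84}, with its extension to $d\geq 3$ from \cite{Bovier06}, yields
\[
\mathbb P\Bigl( \sup_{A \in \mathfrak A} \frac{\Delta_{A, \mathsf j}(h)}{|\partial A|} \geq \frac{1}{2\mathsf q} \Bigr) \leq e^{-c_1/\epsilon^2}
\]
for each $\mathsf j$, with $c_1 = c_1(d,\mathsf q) > 0$. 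A union bound over $\mathsf j\in\{\mathsf 1,\ldots,\mathsf q\}$ then gives $\mathbb P(\mathcal E^c) \leq \mathsf q\, e^{-c_1/\epsilon^2} \leq e^{-c/\epsilon^2}$ after slightly decreasing the constant to absorb the $\mathsf q$ prefactor. The equivalence of the two descriptions of $\mathcal E$ in the lemma statement is immediate from \eqref{eq-def-Delta-Potts} together with the identity $\mathcal Z^{\mathsf 1}(h^{A,\mathsf j})/\mathcal Z^{\mathsf 1}(h) = e^{\Delta_{A,\mathsf j}(h)/T}$.

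The main (mild) obstacle is bookkeeping: one has to track the $\mathsf q$-dependent constants when pushing the $1/(2\mathsf q)$ threshold through each level of the multiscale decomposition, and verify that nothing in the FFS84 coarse-graining argument is sensitive to constants beyond the smallness of the effective $\epsilon$. Since $\mathsf q$ is fixed, however, all these $\mathsf q$-factors can be absorbed into the constant $c = c(d,\mathsf q)$ asserted by the lemma; no genuinely new probabilistic input beyond Lemma~\ref{lem-upper-bound-Delta-RFPM} is required.
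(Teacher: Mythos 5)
Your proposal is correct and follows essentially the same route as the paper: for each fixed $\mathsf j$, feed the centering \eqref{eq-Delta-expectation-Potts} and the sub-Gaussian increment bound of Lemma~\ref{lem-upper-bound-Delta-RFPM} into the coarse-graining argument behind \eqref{ising_bound_2}, then union bound over the $\mathsf q$ rotations, absorbing all $\mathsf q$-dependent factors into $c(d,\mathsf q)$. The extra bookkeeping you describe (rescaling by $2\mathsf q$ and tracking the enlarged variance proxy) is exactly what the paper leaves implicit.
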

\begin{proof}
For a fixed $\mathsf j$, by Lemma~\ref{lem-upper-bound-Delta-RFPM} (also recall \eqref{eq-Delta-expectation-Potts}) and the same derivation for \eqref{ising_bound_2} we get that for $\epsilon < c$ (and a small constant $c>0$)
$$\P\Big(\sup_{A \in \mathfrak A} \frac{\Delta_{A, \mathsf j}}{|\partial A|} \geq \frac{1}{2\mathsf q}\Big) \leq e^{-c/\epsilon^2}\,.$$
Taking a union bound over $\mathsf j \in \{\mathsf 1, \ldots, \mathsf q\}$ yields the lemma (possibly we will need to decrease the value of $c$).
\end{proof}

\begin{proof}[Proof of \eqref{eq-RFPM}]
 For $\sigma\in \{\mathsf 1, \ldots, \mathsf q\}^{\Lambda_N}$, if $\sigma_o  = \mathsf 2$ we let $\mathcal A_\sigma$ be the simply connected component that contains all vertices either on or enclosed by the component of spin $\mathsf 2$ containing the origin (i.e., $\mathcal A_\sigma$ is the simply connected component enclosed by the outmost boundary of the connected component sharing the spin with the origin), and if $\sigma_o \neq \mathsf 2$ we let $\mathcal A_\sigma = \emptyset$. Our mental picture is that for our generalized Peierls argument, we consider a collection of mappings $(h, \sigma) \mapsto (h^{\mathcal A_{\sigma}, \mathsf j}, \sigma^{\mathcal A_\sigma, \mathsf j})$ for $\mathsf j = \mathsf 1, \ldots, \mathsf q-  \mathsf 1$.

By \eqref{def_mu_potts}, under the law $\mathbb{Q}^{\mathsf 1}$, the joint density $\nu^{\mathsf 1}$ of $h$ and $\sigma$ is given by
\begin{equation*}
\nu^{\mathsf 1} (h,\sigma)=\prod_{u \in \Lambda_N} \prod_{\mathsf i= \mathsf 1}^{\mathsf q} \left( \frac{1}{\sqrt{2 \pi}} e^{-\frac{1}{2}{(h_{\mathsf i, u})^2}} \right) \times \frac{e^{-\frac{1}{T} H^{\mathsf 1}(\sigma, \Lambda_N, \epsilon h)}}{\mathcal{Z}^{\mathsf 1}(h)}\,.
\end{equation*}
For $\sigma$ with $\sigma_o = \mathsf 2$, we have that $\mathcal A_\sigma\in \mathfrak A$ and $\sigma_u\neq \sigma_v$ for any $(u, v)\in \partial \mathcal A_\sigma$ where by the $\mathsf 1$ boundary condition we make the convention that $\sigma_u = \mathsf 1$ if $u \not \in \Lambda_N$. Thus,
by \eqref{def_h_potts} we get that for any $\mathsf j$
\begin{equation*}
\nu^{\mathsf 1}(h,\sigma)=\exp \Big(- \frac{1}{T} \sum_{(u, v)\in \partial \mathcal A_\sigma} \mathbf{1}_{\sigma^{\mathcal A_\sigma, \mathsf j}_u =\sigma^{\mathcal A_\sigma, \mathsf j}_v} \Big) \frac{\mathcal{Z}^{\mathsf 1}(h^{\mathcal A_\sigma, \mathsf j})}{\mathcal{Z}^{\mathsf 1}(h)} \nu^{\mathsf 1}(h^{\mathcal A_\sigma, \mathsf j},\sigma^{\mathcal A_\sigma, \mathsf j})\,.
\end{equation*}
Combined with the fact that $
\sum_{\mathsf j= \mathsf 1}^{\mathsf q- \mathsf 1} \sum_{(u, v)\in \partial \mathcal A_\sigma} \mathbf{1}_{\sigma^{\mathcal A_\sigma, \mathsf j}_u =\sigma^{\mathcal A_\sigma, \mathsf j}_v} =\sum_{\mathsf j= \mathsf 1}^{\mathsf q- \mathsf 1} \sum_{(u, v)\in \partial \mathcal A_\sigma} \mathbf{1}_{\sigma^{\mathcal A_\sigma, \mathsf j}_u =\sigma_v}
=|\partial \mathcal A_\sigma|
$, it yields the following analogue of \eqref{key}:
\begin{equation}
\label{potts_key}
\frac{\nu^{\mathsf 1}(h,\sigma)}{\sum_{\mathsf j= \mathsf 1}^{\mathsf q- \mathsf 1} \nu^{\mathsf 1}(h^{\mathcal A_\sigma, \mathsf j},\sigma^{\mathcal A_\sigma, \mathsf j})} \leq e^{-\frac{|\partial \mathcal A_\sigma|}{(\mathsf q-  \mathsf 1) T}} \sup_{\mathsf 1 \leq \mathsf j \leq \mathsf q- \mathsf 1} \left( \frac{\mathcal{Z}^{\mathsf 1}(h^{\mathcal A_\sigma, \mathsf j})}{\mathcal{Z}^{\mathsf 1}(h)} \right) .
\end{equation}
At this point, by a similar derivation of \eqref{eq-RFIM3}, we get that (since the proof is highly similar, we will omit some computational details) for $T<c$ (we will decrease the value of $c$ if necessary)
\begin{align*}
\mathbb{Q}^{\mathsf 1}(\sigma_o = \mathsf 2) &\leq \mathbb{Q}^{\mathsf 1}(\{\sigma_o = \mathsf 2 \} \cap \mathcal{E}) + \mathbb{Q}^{\mathsf 1}(\mathcal{E}^c) \\
&\leq \sum_{A\in \mathfrak A} \int_{h \in \mathcal E} \sum_{\sigma: \mathcal A_\sigma = A} \nu^{\mathsf 1} (h, \sigma)dh  +\mathbb{P}(\mathcal{E}^c) \qquad \mbox{(since $\mathcal A_\sigma\in \mathfrak A$ if $\sigma_o = \mathsf 2$)}  \\
&\leq \sum_{A\in \mathfrak A} \frac{(\mathsf{q} -\mathsf 1) \int_{h \in \mathcal E} \sum_{\sigma: \mathcal A_\sigma = A} \nu^{\mathsf 1} (h, \sigma) dh}{ \sum_{\mathsf j=\mathsf 1}^{\mathsf q- \mathsf 1}\int_{h \in \mathcal E } \sum_{\sigma: \mathcal A_\sigma = A}  \nu^{\mathsf 1}(h^{A,\mathsf j}, \sigma^{A, \mathsf j}) dh}+ \mathbb{P}(\mathcal{E}^c) \quad \mbox{ (since $\int_{h} \sum_{\sigma}  \nu^{\mathsf 1}(h^{A, \mathsf j}, \sigma^{A, \mathsf j})dh  = 1$)}\\
&\leq \sum_{A\in \mathfrak A}  (\mathsf{q}- \mathsf 1) e^{-\frac{|\partial A|}{2(\mathsf q- \mathsf 1) T}} +\mathbb{P}(\mathcal{E}^c)  \qquad \mbox{ (by \eqref{potts_key} and by definition of $\mathcal E$)}\\
&\leq  e^{-c/T}+  e^{-c/\epsilon^2}\qquad \mbox{ (by \eqref{eq-mathfrak-A-size} and Lemma~\ref{lem-Delta-bound-RFPM})}\,,
\end{align*}
as required.
\end{proof}

\medskip

\noindent {\bf Acknowledgement.} We thank Jianping Jiang, Jian Song and Rongfeng Sun for a careful reading of an earlier version of the manuscript and for their helpful comments on improving exposition.

\small


\begin{thebibliography}{10}

\bibitem{Adler90}
R.~J. Adler.
\newblock {\em An introduction to continuity, extrema, and related topics for
  general {G}aussian processes}, volume~12 of {\em Institute of Mathematical
  Statistics Lecture Notes---Monograph Series}.
\newblock Institute of Mathematical Statistics, Hayward, CA, 1990.

\bibitem{AHP20}
M.~Aizenman, M.~Harel, and R.~Peled.
\newblock Exponential decay of correlations in the 2{D} random field {I}sing
  model.
\newblock {\em J. Stat. Phys.}, 180(1-6):304--331, 2020.

\bibitem{AP19}
M.~Aizenman and R.~Peled.
\newblock A power-law upper bound on the correlations in the {$2D$} random
  field {I}sing model.
\newblock {\em Comm. Math. Phys.}, 372(3):865--892, 2019.

\bibitem{AW90}
M.~Aizenman and J.~Wehr.
\newblock Rounding effects of quenched randomness on first-order phase
  transitions.
\newblock {\em Comm. Math. Phys.}, 130(3):489--528, 1990.

\bibitem{Ber85}
A.~Berretti.
\newblock Some properties of random {I}sing models.
\newblock {\em J. Statist. Phys.}, 38(3-4):483--496, 1985.

\bibitem{Borell75}
C.~Borell.
\newblock The {B}runn-{M}inkowski inequality in {G}auss space.
\newblock {\em Invent. Math.}, 30(2):207--216, 1975.

\bibitem{Bovier06}
A.~Bovier.
\newblock {\em Statistical mechanics of disordered systems}, volume~18 of {\em
  Cambridge Series in Statistical and Probabilistic Mathematics}.
\newblock Cambridge University Press, Cambridge, 2006.
\newblock A mathematical perspective.

\bibitem{BK88}
J.~Bricmont and A.~Kupiainen.
\newblock Phase transition in the $3$d random field {I}sing model.
\newblock {\em Comm. Math. Phys.}, 116(4):539--572, 1988.

\bibitem{CJN18}
F.~Camia, J.~Jiang, and C.~M. Newman.
\newblock A note on exponential decay in the random field {I}sing model.
\newblock {\em J. Stat. Phys.}, 173(2):268--284, 2018.

\bibitem{Chalker83}
J.~Chalker.
\newblock On the lower critical dimensionality of the ising model in a random
  field.
\newblock {\em J. Phys. C}, 16(34):6615--6622, 1983.

\bibitem{Chatterjee18}
S.~Chatterjee.
\newblock On the decay of correlations in the random field {I}sing model.
\newblock {\em Comm. Math. Phys.}, 362(1):253--267, 2018.

\bibitem{DHP21}
P.~Dario, M.~Harel, and R.~Peled.
\newblock Quantitative disorder effects in low-dimensional spin systems.
\newblock Preprint, arXiv:2101.01711.

\bibitem{DSS21}
J.~Ding, J.~Song, and R.~Sun.
\newblock A new correlation inequality for ising models with external fields.
\newblock Preprint, arXiv:2107.09243.

\bibitem{DW20}
J.~Ding and M.~Wirth.
\newblock Correlation length of two-dimensional random field ising model via
  greedy lattice animal.
\newblock Preprint, arXiv:2011.08768.

\bibitem{DX21}
J.~Ding and J.~Xia.
\newblock Exponential decay of correlations in the two-dimensional random field
  {I}sing model.
\newblock {\em Invent. Math.}, 224(3):999--1045, 2021.

\bibitem{Dudley67}
R.~M. Dudley.
\newblock The sizes of compact subsets of {H}ilbert space and continuity of
  {G}aussian processes.
\newblock {\em J. Functional Analysis}, 1:290--330, 1967.

\bibitem{Fernique71}
X.~Fernique.
\newblock R\'{e}gularit\'{e} de processus gaussiens.
\newblock {\em Invent. Math.}, 12:304--320, 1971.

\bibitem{FFS84}
D.~S. Fisher, J.~Fr\"{o}hlich, and T.~Spencer.
\newblock The {I}sing model in a random magnetic field.
\newblock {\em J. Statist. Phys.}, 34(5-6):863--870, 1984.

\bibitem{FI84}
J.~Fr\"{o}hlich and J.~Z. Imbrie.
\newblock Improved perturbation expansion for disordered systems: beating
  {G}riffiths singularities.
\newblock {\em Comm. Math. Phys.}, 96(2):145--180, 1984.

\bibitem{Imbrie85}
J.~Z. Imbrie.
\newblock The ground state of the three-dimensional random-field {I}sing model.
\newblock {\em Comm. Math. Phys.}, 98(2):145--176, 1985.

\bibitem{IM75}
Y.~Imry and S.-K. Ma.
\newblock Random-field instability of the ordered state of continuous symmetry.
\newblock {\em Phys. Rev. Lett.}, 35:1399--1401, Nov 1975.

\bibitem{LM98}
J.~L. Lebowitz and A.~E. Mazel.
\newblock Improved {P}eierls argument for high-dimensional {I}sing models.
\newblock {\em J. Statist. Phys.}, 90(3-4):1051--1059, 1998.

\bibitem{Peierls1936}
R.~Peierls.
\newblock On ising's model of ferromagnetism.
\newblock {\em Mathematical Proceedings of the Cambridge Philosophical
  Society}, 32(3):477–481, 1936.

\bibitem{Ruelle69}
D.~Ruelle.
\newblock {\em Statistical mechanics: {R}igorous results}.
\newblock W. A. Benjamin, Inc., New York-Amsterdam, 1969.

\bibitem{SudakovTsirelson74}
V.~N. Sudakov and B.~S. Tsirel'son.
\newblock Extremal properties of half-spaces for spherically invariant
  measures.
\newblock {\em J. Sov. Math}, 9:9--18, 1978.

\bibitem{Talagrand87}
M.~Talagrand.
\newblock Regularity of {G}aussian processes.
\newblock {\em Acta Math.}, 159(1-2):99--149, 1987.

\bibitem{Talagrand96}
M.~Talagrand.
\newblock Majorizing measures: the generic chaining.
\newblock {\em Ann. Probab.}, 24(3):1049--1103, 1996.

\bibitem{Talagrand05}
M.~Talagrand.
\newblock {\em The generic chaining}.
\newblock Springer Monographs in Mathematics. Springer-Verlag, Berlin, 2005.
\newblock Upper and lower bounds of stochastic processes.

\bibitem{Van16}
R.~van Handel.
\newblock {\em Probability in High Dimension}.
\newblock Lecture notes in progress, available at
  \textit{https://web.math.princeton.edu/~rvan/APC550.pdf}.

\end{thebibliography}
\end{document}